\documentclass[a4paper,12pt]{amsart}

\usepackage{amsfonts}
\usepackage{amsmath}
\usepackage{amssymb}

\usepackage[usenames]{color}
\usepackage[colorlinks]{hyperref}

\setlength{\textwidth}{15.2cm}
\setlength{\textheight}{22.7cm}
\setlength{\topmargin}{0mm}
\setlength{\oddsidemargin}{3mm}
\setlength{\evensidemargin}{3mm}
\setlength{\footskip}{1cm}


\numberwithin{equation}{section}
\theoremstyle{plain}
\newtheorem{thm}{Theorem}[section]

\newtheorem{lemma}[thm]{Lemma}

\theoremstyle{definition}
\newtheorem{defi}[thm]{Definition}
\newtheorem{rem}[thm]{Remark}
\newtheorem{assumption}[thm]{Assumption}

\begin{document}

\title[Cauchy type problems for fractional differential equation]
{Initial, inner and inner-boundary problems for a fractional differential equation}

\author[Erkinjon Karimov]{Erkinjon Karimov}
\address{
  Erkinjon Karimov:
  \endgraf
  Institute of Mathematics, Uzbekistan Academy of Sciences
  \endgraf
  81 Mirzo Ulugbek street, Tashkent, 100170
  \endgraf
  Uzbekistan
  \endgraf
  {\it E-mail address} {\rm erkinjon@gmail.com}
  and
  \endgraf
 FracDiff Research Group
 \endgraf
Sultan Qaboos University
\endgraf
Oman
\endgraf
  {\it E-mail address} {\rm erkinjon@squ.edu.om}
 }

\author[Michael Ruzhansky]{Michael Ruzhansky}
\address{
  Michael Ruzhansky:
  \endgraf
Department of Mathematics: Analysis,
Logic and Discrete Mathematics
  \endgraf
Ghent University, Belgium
  \endgraf
 and
  \endgraf
 School of Mathematical Sciences
 \endgraf
Queen Mary University of London
\endgraf
United Kingdom
\endgraf
  {\it E-mail address} {\rm michael.ruzhansky@ugent.be}
 }

\author[Niyaz Tokmagambetov]{Niyaz Tokmagambetov}
\address{
  Niyaz Tokmagambetov:
  \endgraf
Department of Mathematics: Analysis,
Logic and Discrete Mathematics
  \endgraf
Ghent University, Belgium
  \endgraf
 and
 \endgraf
    Al--Farabi Kazakh National University
  \endgraf
  Almaty, Kazakhstan
  \endgraf
  {\it E-mail address} {\rm tokmagambetov@math.kz}
 }

\thanks{The authors were supported by FWO Odysseus 1 grant G.0H94.18N: Analysis and Partial Differential Equations. MR was also supported in parts by the EPSRC Grant EP/R003025/1, by the Leverhulme Research Grant RPG-2017-151.}

\date{\today}

\subjclass[2010]{35K90, 42A85, 44A35.}

\keywords{Wave equation, initial, inner, inner-boundary problems}

\begin{abstract}
While it is known that one can consider the Cauchy problem for evolution equations with Caputo derivatives, the situation for the initial value problems for the Riemann-Liouville deri\-vatives is less understood.
In this paper we propose new type initial, inner and inner-boundary value problems for fractional differential equations with the Riemann-Liouville deri\-vatives. The results on the existence and uniqueness are proved, and conditions on the solvability are found. The well-posedness of the new type initial, inner and inner-boundary conditions are also discussed. Moreover, we give explicit formulas for the solutions. As an application fractional partial differential equations for general positive operators are studied.
\end{abstract}

\maketitle
\tableofcontents
\section{Introduction}
It is well-known that initial-value problems (Cauchy problem) for differential equations (DEs) play crucial role in studying different kinds of problems for differential equations and their applications. Therefore, fractional generalisations of integer order differential equations started with the consideration of modified Cauchy problems.

While it is known that one can consider the usual Cauchy initial conditions for evolution equations with Caputo derivatives, the situation for the initial value problems for the Riemann-Liouville deri\-vatives is less understood. The first aim of this paper is to discuss possible types of initial conditions for the basic equation
\begin{equation}\label{fde0}
\left(D_{0+}^\alpha u\right)(t)-m u(t)=f(t),\,\,\,0<t<T,
\end{equation}
for $1<\alpha\leq 2$, where $f(t)$ is a given function, $m,\,T$ are given real numbers, and
\begin{equation}
\label{rld0}
\left(D_{0+}^\alpha u\right)(t)=\dfrac{1}{\Gamma(n-\alpha)}\left(\dfrac{d}{dt}\right)^n\int\limits_0^t \dfrac{u(s)ds}{(t-s)^{\alpha-n+1}}
\end{equation}
is the Riemann-Liouville fractional derivative of order $\alpha$ of the function $u(t)$. Once this is understood, we apply the results for the corresponding partial differential (or more general operator) equations of the same type.

The idea of reducing the Cauchy problem for fractional differential equations to the Volterra integral equation was carried out by Pitcher and Sewel \cite{PS}. They considered the following Cauchy problem
\begin{equation*}
    \left\{
    \begin{array}{l}
        \left(D_{a+}^\alpha y\right)(x)=f\left[x,y(x)\right],\,\,0<\alpha<1,\\
        \, y(a)=0,\\
    \end{array}
    \right.
\end{equation*}
and proved the existence of the continuous solution $y(x)$ for the corresponding nonlinear integral equation
\begin{equation*}
    y(x)=\frac{1}{\Gamma(\alpha)}\int\limits_a^x\frac{f\left[t,y(t)\right]}{(x-t)^{1-\alpha}}dt,
\end{equation*}
under some conditions on $f$.

Here $\left(D_{a+}^\alpha y\right)(x)$ is the Riemann-Liouville fractional derivative of order $\alpha$ (see (\ref{rld})). One can find more detailed references in this regard in the survey paper by Kilbas and Trujillo \cite{KT}.

Here we would like to note only some works, where the Cauchy problems for differential equations with different kinds of fractional derivative were studied. In some cases, initial conditions have to be modified according to the used fractional derivatives. For instance, in \cite{Tom12} Tomovski investigated the following Cauchy problem:
\begin{equation*}
     \left\{
    \begin{array}{l}
        \left(D_{a+}^{\mu, \nu} y\right)(x)=f\left[x,y(x)\right],\,\,n-1<\mu\leq n,\,n\in \mathbb{N},\,0\leq \nu\leq 1,\\
        \lim\limits_{x\rightarrow a+}\frac{d^k}{dx^k}I_{a+}^{(n-\mu)(1-\nu)}y(x)=c_k,\,\,c_k\in\mathbb{R}, \,k=0,1,...,n-1,\\
    \end{array}
    \right.
\end{equation*}
where
\begin{equation*}
    \left(D_{a+}^{\mu, \nu} y\right)(x)=I_{a+}^{\nu(1-\mu)}\frac{d^n}{dx^n}I_{a+}^{(1-\nu)(n-\mu)}y(x)
\end{equation*}
is the right-hand side composite fractional derivative (Hilfer) of order $0<\mu<1$ and type $0\leq \nu\leq 1$ \cite{H09}, $I_{a+}^\alpha y(x)$ is the Riemann-Liouville fractional integral of order $\alpha$ (see (\ref{rli})).

In the works of the first author with his co-authors, due to consideration of the Caputo type fractional derivatives, they have used usual initial conditions, for instance, see \cite{F18} in the case of Hyper-Bessel fractional derivative and \cite{N18} in the case of fractional derivative with the Mittag-Leffler function in the kernel.

Changes in initial conditions might be motivated to avoid certain conditions on the orders of considered fractional differential equations. As Pskhu mentioned in his work \cite{pskhu11}, the Cauchy problem
\begin{equation*}
     \left\{
    \begin{array}{l}
        \left(D_{0+}^{\alpha} y\right)(x)-\lambda\left(D_{0+}^{\beta} y\right)(x) =0,\,1<\alpha<2,\,\beta<\alpha,\\
        \lim\limits_{x\rightarrow 0+}\left(D_{0+}^{\alpha-1} y\right)(x)=A,\,\,\lim\limits_{x\rightarrow 0+}\left(D_{0+}^{\alpha-2} y\right)(x)=B,\\
    \end{array}
    \right.
\end{equation*}
is ill-posed without the additional condition $\beta>\alpha-1$.

In general, the Cauchy problem for a multi-term fractional differential equation
\begin{equation*}
     \left\{
    \begin{array}{l}
        \left(D_{0+}^{\alpha} y\right)(x)-\sum\limits_{i=1}^m \lambda_i\left(D_{0+}^{\alpha_i} y\right)(x) =f(x),\\
        \lim\limits_{x\rightarrow 0+}\left(D_{0+}^{\alpha-k} y\right)(x)=A_k,\,\,1\leq k\leq n,\\
    \end{array}
    \right.
\end{equation*}
where $\alpha>0,\,\alpha\in (n-1,n],\,n\in\mathbb{N},\,\alpha>\alpha_i,\,\lambda_i\in\mathbb{R},\,i=1, \dots, n,\,x\in(0,l)$, would be well-posed if one imposes the condition $\alpha-n+1>\max\limits_{1\leq i \leq n}\{\alpha_i\}$.

In order to avoid this restriction, Pskhu suggested to use the following initial conditions
\begin{equation*}
    \lim\limits_{x\rightarrow 0+} \textbf{l}^ky(x)=a_k,\,\,\,1\leq k\leq n,
\end{equation*}
where
\begin{equation*}
    \textbf{l}^ky(x)=\left(D_{0+}^{\alpha-k} y\right)(x)-\sum\limits_{i=0}^{M(k)} \lambda_i\left(D_{0+}^{\alpha_i-k} y\right)(x),
\end{equation*}
\begin{equation*}
    M(k)=
    \left\{
    \begin{array}{l}
    0,\,\,\,\,if\,\,\,\,\alpha_i<\alpha-n+k,\,\,\textrm{ for all }\,i\\
    \max\limits_{\alpha_i-k\geq \alpha-n}\{i\},\,\,\,\textrm{ otherwise}.
    \end{array}
    \right.
\end{equation*}

We can also note some papers related with investigations of boundary value problems for ordinary differential equations with different fractional derivatives. In particular, in \cite{nakh} Sturm-Liouville problem for the second order differential equations with fractional derivatives in lower terms has been studied. Generalized Dirichlet and Neumann problems for multi-term fractional derivatives were subject of investigation in \cite{gadz1}, \cite{gadz2}. In \cite{mazh}, the author considered the following equation
\[
\partial_{0t}^\alpha u(t)-\lambda u(t)-\mu H(t-\tau)u(t-\tau)=f(t),\,\,0<t<1,
\]
where $\partial_{0t}^\alpha u(t)=\left(D_{0+}^{\alpha-n} u^{(n)}\right)(t)$, $H(t)$ is the Heaviside function, $n-1<\alpha\le n,$ $\lambda$, $\mu$ are any constants, $\tau$ is a fixed positive number.

A unique solvability of the Cauchy problem was proved. Moreover, in the case of $1<\alpha\le 2$, a boundary value problem with the following conditions
$$
au(0)+bu'(0)=0,\,\,\,\,cu(1)+du'(1)=0,
$$
where $a,b,c,d$ are given constants such that $a^2+b^2\neq 0$, $c^2+d^2\neq 0$, was investigated for the subject of the unique solvability under some conditions on parameters. A similar problem for a multi-term fractional differential equation with the Caputo derivative was studied in \cite{gadz3}. For more works related to this topic, we refer to the authors' papers \cite{KMR18, RTT20a, RTT20b}.

\section{Cauchy type problems}

In this paper our main target is the following integro-differential equation
\begin{equation}\label{fde}
\left(D_{0+}^\alpha u\right)(t)-m u(t)=f(t),\,\,\,0<t<T,
\end{equation}
for $1<\alpha\leq 2$, where $f(t)$ is a given function and $m,\,T$ are given real numbers. For $n:=[\alpha]+1$,
\begin{equation}
\label{rld}
\left(D_{0+}^\alpha u\right)(t)=\left(\dfrac{d}{dt}\right)^n\left(I_{0+}^{n-\alpha} u\right)(t)=\dfrac{1}{\Gamma(n-\alpha)}\left(\dfrac{d}{dt}\right)^n\int\limits_0^t \dfrac{u(s)ds}{(t-s)^{\alpha-n+1}}
\end{equation}
is the Riemann-Liouville fractional derivative of order $\alpha$ of the function $u(t)$. Here
\begin{equation}\label{rli}
\left(I_{0+}^{\alpha} u\right)(t)=\frac{1}{\Gamma(\alpha)}\int\limits_0^t \frac{u(s)ds}{(t-s)^{1-\alpha}}
\end{equation}
is the Riemann-Liouville fractional integral of order $0<\alpha<1$ of the function $u(t)$.

The general solution of the equation (\ref{fde}) is given by \cite{Kilbas} in the form
\begin{equation}\label{gs}
\begin{split}
u(t)=C_1t^{\alpha-1}E_{\alpha,\alpha}(mt^\alpha)&+C_2t^{\alpha-2}E_{\alpha,\alpha-1}(mt^\alpha)\\
&+\int\limits_0^t(t-s)^{\alpha-1}E_{\alpha,\alpha}(m(t-s)^\alpha)f(s)ds.
\end{split}
\end{equation}
Here $C_1$ and $C_2$ are unknown constants and
\begin{equation}\label{mlf}
E_{\alpha,\beta}(z)=\sum\limits_{k=0}^\infty \frac{z^k}{\Gamma(\alpha k+\beta)}
\end{equation}
is the two-parameter Mittag-Leffler function.

The most interesting properties of the two-parameter Mittag-Leffler function are associated with its asymptotic expansions as $z\rightarrow\infty,$ as shown in \cite[p.43]{Kilbas}:

\begin{lemma} \textsc{(\cite[p.43]{Kilbas})}
\label{C1: PrML}
Let $0<\alpha<2$ and $\pi\alpha/2<\mu<\min \{\pi,\pi \alpha\}.$ Then for $\mu\le \arg(z)\le \pi$, we have the following estimates
\begin{equation}
\label{Est-MLF}
\left|E_{\alpha, \beta}(z)\right|\le\frac{C}{1+|z|},
\end{equation}
where $C$ is a constant not depending on $z.$
\end{lemma}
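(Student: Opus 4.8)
\medskip
\noindent\emph{Sketch of the proof.} The plan is to split the $z$-plane into the bounded region $|z|\le 1$ and the unbounded region $|z|\ge 1$, to handle the former by elementary considerations, to control the latter through the Hankel-type contour representation of $E_{\alpha,\beta}$, and then to glue the two estimates together.

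On $|z|\le 1$ one only needs that the series \eqref{mlf} has infinite radius of convergence, since $\Gamma(\alpha k+\beta)$ grows faster than any geometric factor; hence $E_{\alpha,\beta}$ is entire, in particular continuous and therefore bounded on the closed unit disc by some constant $C_0$. As $1\le 1+|z|\le 2$ there, this already gives $|E_{\alpha,\beta}(z)|\le 2C_0/(1+|z|)$ on the disc, with no restriction on $\arg z$.

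For $|z|\ge 1$ I would fix an auxiliary angle $\delta$ with $\pi\alpha/2<\delta<\mu$ (possible because $\mu>\pi\alpha/2$, and then $\delta<\min\{\pi,\pi\alpha\}$ automatically), and start from the classical representation
\begin{equation*}
E_{\alpha,\beta}(z)=\frac{1}{2\pi i\,\alpha}\int_{\gamma(\varepsilon;\delta)}\frac{e^{\zeta^{1/\alpha}}\,\zeta^{(1-\beta)/\alpha}}{\zeta-z}\,d\zeta,
\qquad \delta\le|\arg z|\le\pi,
\end{equation*}
valid for every $\varepsilon>0$, where $\gamma(\varepsilon;\delta)$ comes in along $\arg\zeta=-\delta$ from $\infty$ to $\varepsilon$, runs over the arc $|\zeta|=\varepsilon$ with $\arg\zeta$ increasing from $-\delta$ to $\delta$, and leaves along $\arg\zeta=\delta$ back to $\infty$; one obtains it by substituting Hankel's integral for $1/\Gamma(\alpha k+\beta)$ into \eqref{mlf} and summing the geometric series, the independence of $\varepsilon$ following from Cauchy's theorem because the integrand is holomorphic off $(-\infty,0]\cup\{z\}$ and, when $\arg z\in[\mu,\pi]$ with $\delta<\mu$, the point $z$ lies strictly outside the sector enclosed by $\gamma(\varepsilon;\delta)$. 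That same angular gap $\mu-\delta>0$ furnishes a bound $|\zeta-z|\ge c_\delta\,(|\zeta|+|z|)$ valid uniformly along the contour and for all admissible $z$. Writing $\dfrac{1}{\zeta-z}=-\dfrac{1}{z}+\dfrac{\zeta}{z(\zeta-z)}$ and integrating term by term, the first piece reproduces $-z^{-1}/\Gamma(\beta-\alpha)$ after the change of variables $w=\zeta^{1/\alpha}$ and Hankel's formula for $1/\Gamma$, while the remainder is dominated, using the lower bound for $|\zeta-z|$ together with the decay $|e^{\zeta^{1/\alpha}}|=e^{\,|\zeta|^{1/\alpha}\cos(\delta/\alpha)}$ on the two rays---here $\cos(\delta/\alpha)<0$ exactly because $\delta>\pi\alpha/2$---and boundedness of the integrand on the arc; fixing e.g.\ $\varepsilon=1$ makes the resulting one-dimensional integrals converge to finite constants independent of $z$, so the remainder is $O(|z|^{-2})$ uniformly for $\mu\le\arg z\le\pi$. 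Consequently $|E_{\alpha,\beta}(z)|\le C_1|z|^{-1}\le 2C_1/(1+|z|)$ there.

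Combining the two regimes with $C:=2\max\{C_0,C_1\}$ yields \eqref{Est-MLF}. I expect the only genuinely delicate point to be the \emph{uniform} control of the remainder integral: one has to secure the lower bound on $|\zeta-z|$ from the angular separation $\mu-\delta$ and simultaneously exploit $\cos(\delta/\alpha)<0$ to make the ray contributions converge, which is precisely why the hypotheses pin $\mu$---and hence $\delta$---strictly above $\pi\alpha/2$; the rest is bookkeeping with the Gamma function.
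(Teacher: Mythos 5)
The paper does not actually prove this lemma: it is imported verbatim from \cite[p.~43]{Kilbas}, so there is no internal argument to measure yours against. Your reconstruction is precisely the standard proof of that cited estimate (Podlubny, Gorenflo--Kilbas--Mainardi--Rogosin): boundedness of the entire function on $|z|\le 1$, the Hankel-type contour representation with aperture $\delta\in(\pi\alpha/2,\mu)$ for $|z|\ge 1$, and the splitting $\frac{1}{\zeta-z}=-\frac{1}{z}+\frac{\zeta}{z(\zeta-z)}$ yielding the leading term $-z^{-1}/\Gamma(\beta-\alpha)$ plus an $O(|z|^{-2})$ remainder. The two points you single out as delicate are indeed the load-bearing ones and both check out: the angular gap $\mu-\delta>0$ gives $|\zeta-z|\ge c_{\delta}(|\zeta|+|z|)$ uniformly on the contour, and $\pi/2<\delta/\alpha<\pi$ (the latter from $\delta<\min\{\pi,\pi\alpha\}$) gives $\cos(\delta/\alpha)<0$ and hence convergence of the ray integrals; note also that the bound survives even when $1/\Gamma(\beta-\alpha)=0$, since only $O(|z|^{-1})$ is claimed. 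The constant inevitably depends on $\alpha$, $\beta$ and $\mu$, which is consistent with the lemma's ``not depending on $z$''; the degeneration of the admissible range for $\mu$ as $\alpha\to 2$ is a feature of the quoted statement, not a defect of your argument.
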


\begin{defi} (\cite{Dim82, LG99})
\label{FS}
Let us consider the set of complex-valued functions $f$ defined on $[a, b]$. Fix $\gamma\geq-1$.
\begin{itemize}
\item We say that $f\in C_{\gamma}^{0}[a, b]:=C_{\gamma}[a, b]$,  if there is a real number $p>\gamma,$ such that
$$
f(x)=(x-a)^{p}f_{1}(x)
$$
with $f_{1}\in C[a, b]$.
\item We say that $f\in C_{\gamma}^{m}[a, b]$, $m\in\mathbb N_{0}$, if and only if
$$
f^{(m)}\in C_{\gamma}[a, b].
$$
\end{itemize}
From \cite{LG99} it follows that $C_{\gamma}[a, b]$ is a vector space and the set of spaces $C_{\gamma}[a, b]$ is ordered by inclusion according to
$$
C_{\gamma}[a, b]\subset C_{\beta}[a, b] \Leftrightarrow \gamma\geq\beta\geq-1.
$$
For further properties of $C_{\gamma}[a, b]$ we refer to \cite{LG99}.
\end{defi}

\subsection{Generalized initial (Cauchy) problem}

Our main aim is to find unknown constants of \eqref{gs} by using initial, inner and inner boundary conditions. First we start from the initial conditions.

Here, we are interested in finding solutions of the integro-differential equation \eqref{fde} satisfying the following initial conditions
\begin{equation}\label{initc}
\left.\frac{t^{2-\alpha-\beta}}{\Gamma(\alpha+\beta-1)}\left(I_{0+}^\beta u\right)(t)\right|_{t=0}=\hat{c_1},\,\left.\left(D_{0+}^\gamma u\right)(t)\right|_{t=0}=\hat{c_2},
\end{equation}
 where $\hat{c_1},\,\hat{c_2},\,\beta,\,\gamma$ are given real numbers such that $0\leq\beta<1,\,0<\gamma\leq 1$.

A motivation to consider generalized initial conditions of the type \eqref{initc} comes from the physical/technical point. Measuring data of the processes at the very short time is somehow impossible, and usually it takes a while. So, the models averaging this data and proposing its limit (trace-like) cases are crucial to better understand the whole process.

Using the integral operator \eqref{rli}, from the general form \eqref{gs} of the solution of the equation \eqref{fde} we obtain
\begin{equation*}
\begin{split}
\left(I_{0+}^{\beta} u\right)(t)&=\dfrac{C_1}{\Gamma(\beta)}\int\limits_0^t(t-s)^{\beta-1}s^{\alpha-1}E_{\alpha,\alpha}(ms^\alpha)ds
+\dfrac{C_2}{\Gamma(\beta)}\int\limits_0^t(t-s)^{\beta-1}s^{\alpha-2}E_{\alpha,\alpha-1}(ms^\alpha)ds \\
&+\dfrac{1}{\Gamma(\beta)}\int\limits_0^t(t-s)^{\beta-1}\left(\int\limits_0^s(s-z)^{\alpha-1}E_{\alpha,\alpha}(m(s-z)^\alpha)f(z)dz\right)ds.
\end{split}
\end{equation*}
Using \cite[p.25]{Podlubny}, for $\nu>0$ and $\beta>0$ we have
\begin{equation}\label{mlip}
\frac{1}{\Gamma(\nu)}\int\limits_0^{z}(z-s)^{\nu-1}s^{\beta-1}E_{\alpha,\beta}(\lambda s^\alpha)ds=z^{\beta+\nu-1}E_{\alpha,\beta+\nu}(\lambda z^\alpha),
\end{equation}
and also changing the order of integration in the latter integral, we find
\begin{equation*}
\begin{split}
\left(I_{0+}^{\beta} u\right)(t)&=C_1t^{\alpha+\beta-1}E_{\alpha, \alpha+\beta}(mt^\alpha)+C_2t^{\alpha+\beta-2}E_{\alpha, \alpha+\beta-1}(mt^\alpha)\\
&+\dfrac{1}{\Gamma(\beta)}\int\limits_0^tf(z)dz\int\limits_z^t(t-s)^{\beta-1}(s-z)^{\alpha-1}E_{\alpha,\alpha}(m(s-z)^\alpha)ds.
\end{split}
\end{equation*}
Since
\[
\frac{1}{\Gamma(\beta)}\int\limits_z^t (t-s)^{\beta-1}(s-z)^{\alpha-1}E_{\alpha,\alpha}(m(s-z)^\alpha)ds=(t-z)^{\alpha+\beta-1}E_{\alpha,\alpha+\beta}(m(t-z)^\alpha),
\]
we get
\begin{equation}\label{fic1}
\begin{split}
\left(I_{0+}^{\beta} u\right)(t)&=C_1t^{\alpha+\beta-1}E_{\alpha, \alpha+\beta}(mt^\alpha)+C_2t^{\alpha+\beta-2}E_{\alpha, \alpha+\beta-1}(mt^\alpha)\\
&+\int\limits_0^tf(t-s)s^{\alpha+\beta-1}E_{\alpha,\alpha+\beta}(ms^\alpha)ds.
\end{split}
\end{equation}

By letting the integro-differential operator \eqref{rld} act on the general solution \eqref{gs} of the equation \eqref{fde}, we get
\begin{equation*}
\begin{split}
\left.\left(D_{0+}^\gamma u\right)(t)\right|_{t=0}
&=C_1D_{0+}^\gamma\left[t^{\alpha-1}E_{\alpha,\alpha}(mt^\alpha)\right]+C_2D_{0+}^\gamma\left[t^{\alpha-2}E_{\alpha,\alpha-1}(mt^\alpha)\right]\\
&+D_{0+}^\gamma\left[\int\limits_0^t (t-s)^{\alpha-1}E_{\alpha,\alpha}(m(t-s)^\alpha)f(s)ds\right].
\end{split}
\end{equation*}
Using \cite[p.21]{Podlubny}, \begin{equation}\label{fdml}
D_{0+}^\gamma \left[s^{\beta-1}E_{\alpha,\beta}(\lambda s^\alpha)\right]=s^{\beta-\gamma-1}E_{\alpha,\beta-\gamma}(\lambda s^\alpha), \,\,\, \hbox{for} \,\,\, \alpha>0, \beta>\gamma,
\end{equation}
we obtain
\begin{equation*}
D_{0+}^\gamma\left[t^{\alpha-1}E_{\alpha,\alpha}(mt^\alpha)\right]=t^{\alpha-\gamma-1}E_{\alpha,\alpha-\gamma}(mt^\alpha),
\end{equation*}
\begin{equation}\label{D_gamma}
D_{0+}^\gamma\left[t^{\alpha-2}E_{\alpha,\alpha-1}(mt^\alpha)\right]=t^{\alpha-\gamma-2}E_{\alpha,\alpha-1-\gamma}(mt^\alpha),
\end{equation}
and
\begin{equation*}
\begin{split}
D_{0+}^\gamma&\left[\int\limits_0^t (t-s)^{\alpha-1}E_{\alpha,\alpha}(ms^\alpha)f(s)ds\right]
\\
&=\dfrac{1}{\Gamma(1-\gamma)}\dfrac{d}{dt}\int\limits_0^t(t-s)^{-\gamma}\left(\int\limits_0^s(s-z)^{\alpha-1}E_{\alpha,\alpha}(m(s-z)^\alpha)f(z)dz\right)ds
\\
&=\dfrac{1}{\Gamma(1-\gamma)}\dfrac{d}{dt}\int\limits_0^t f(z)dz\int\limits_z^t(t-s)^{-\gamma}(s-z)^{\alpha-1}E_{\alpha,\alpha}(m(s-z)^\alpha)ds
\\
&=\dfrac{1}{\Gamma(1-\gamma)}\dfrac{d}{dt}\int\limits_0^t f(z)dz\int\limits_0^{t-z}(t-z-\xi)^{1-\gamma-1}\xi^{\alpha-1}E_{\alpha,\alpha}(m\xi^\alpha)d\xi
\\
&=\dfrac{d}{dt}\int\limits_0^t f(z)(t-z)^{\alpha-\gamma}E_{\alpha,\alpha-\gamma+1}(m(t-z)^\alpha)dz
\\
&=f(t)(t-t)^{\alpha-\gamma}E_{\alpha,\alpha-\gamma+1}(m(t-t)^\alpha)+\int\limits_0^tf(z)\dfrac{d}{dt}\left[(t-z)^{\alpha-\gamma}E_{\alpha,\alpha-\gamma+1}(m(t-z)^\alpha)\right]dz
\\
&=\int\limits_0^tf(t-z)z^{\alpha-\gamma-1}E_{\alpha,\alpha-\gamma}(mz^\alpha)dz.
\end{split}
\end{equation*}

In what follows, we will need a special critical case of \eqref{D_gamma}, namely, when $\gamma=\alpha-1$. For this, we calculate
$$
D_{0+}^{\alpha-1}\left[t^{\alpha-2}E_{\alpha,\alpha-1}(mt^\alpha)\right] = \dfrac{1}{\Gamma(2-\alpha)}\dfrac{d}{dt}\int\limits_0^t(t-s)^{1-\alpha} s^{\alpha-2}E_{\alpha,\alpha-1}(m s^\alpha)ds
$$
by using \eqref{mlip} when $\nu=2-\alpha, \beta=\alpha-1, \lambda=m$. Thus, we get
\begin{equation}\label{D_alpha-1}
\begin{split}
D_{0+}^{\alpha-1}\left[t^{\alpha-2}E_{\alpha,\alpha-1}(mt^\alpha)\right] &=\dfrac{d}{dt} \left[E_{\alpha,1}(mt^\alpha)\right]
\\
&=\dfrac{d}{dt} \left[\sum\limits_{k=0}^{\infty}\frac{(mt^\alpha)^{k}}{\Gamma(\alpha k+1)}\right]
\\
&=mt^{\alpha-1} E_{\alpha, \alpha}(mt^\alpha).
\end{split}
\end{equation}
Now, for formality, by using the formula $E_{\alpha, \beta}(z)-zE_{\alpha, \beta}(z)=\frac{1}{\Gamma(\beta)}$ when $\beta=0$ and $z=mt^\alpha$, we obtain
\begin{equation}\label{D0}
\begin{split}
D_{0+}^{\alpha-1}\left[t^{\alpha-2}E_{\alpha,\alpha-1}(mt^\alpha)\right] = \frac{1}{t} E_{\alpha, 0}(mt^\alpha).
\end{split}
\end{equation}
Here we take into account that $\frac{1}{\Gamma(\beta)}\to 0$ as $\beta\to 0$. Note that the two-parameter Mittag-Leffler functions $E_{\alpha, \beta}(z)$ for complex-valued $\beta$ are well-defined and studied, for example, in the book \cite{GKMR14}.

Hence
\begin{equation}\label{fic2}
\begin{split}
\left(D_{0+}^\gamma u\right)(t)=&C_1t^{\alpha-\gamma-1}E_{\alpha,\alpha-\gamma}(mt^\alpha)+C_2t^{\alpha-\gamma-2}E_{\alpha,\alpha-1-\gamma}(mt^\alpha)\\
&+\int\limits_0^tf(t-s)s^{\alpha-\gamma-1}E_{\alpha,\alpha-\gamma}(ms^\alpha)ds,
\end{split}
\end{equation}
for all $\gamma\leq\alpha-1$. Considering (\ref{fic1}) and the first condition of \eqref{initc}, if $2-\alpha-\beta\ge 0$ we get
\begin{equation*}
\begin{split}
&\left.\frac{1}{\Gamma(\alpha+\beta-1)}t^{2-\alpha-\beta}\left(I_{0+}^\beta u\right)(t)\right|_{t=0}\\
&=\left[C_1tE_{\alpha,\alpha+\beta}(mt^\alpha)+C_2E_{\alpha,\alpha+\beta-1}(mt^\alpha)
\left.+t^{2-\alpha-\beta}\int\limits_0^tf(t-s)s^{\alpha+\beta-1}E_{\alpha,\alpha+\beta}(ms^\alpha)ds\right]\right|_{t=0}\\
&=
\hat{c_1},
\end{split}
\end{equation*}
which yields $C_2=\hat{c_1}$.

Now we substitute (\ref{fic2}) into the second condition of (\ref{initc}), that is, one obtains
\begin{equation}\label{ev1}
\begin{split}
&\left.\left(D_{0+}^\gamma u\right)(t)\right|_{t=0}\\
&=\left[C_1t^{\alpha-\gamma-1}E_{\alpha,\alpha-\gamma}(mt^\alpha)+C_2t^{\alpha-\gamma-2}E_{\alpha,\alpha-1-\gamma}(mt^\alpha)
\left.+\int\limits_0^tf(t-s)s^{\alpha-\gamma-1}E_{\alpha,\alpha-\gamma}(ms^\alpha)ds\right]\right|_{t=0}\\
&=\hat{c_2}.
\end{split}
\end{equation}
In order to find $C_1$ we have to suppose that $\gamma=\alpha-1$,
otherwise we would not be able to find $C_1$.
Indeed, the first term in the right-hand side of (\ref{ev1}) becomes zero due to $\alpha-1-\gamma>0$ as $t=0$, and $\dfrac{t^{\alpha-\gamma-2}}{\Gamma(\alpha-1-\gamma)}$ has a singularity at point $t=0$. Hence, if  $\gamma= \alpha-1$
from (\ref{ev1}) it follows that $C_1=\hat{c_2}$.

Finally, the solution of (\ref{fde}) satisfying the initial conditions (\ref{initc}) when $\gamma= \alpha-1$ has the form
\begin{equation}\label{sic}
\begin{split}
u(t)=\hat{c_2}t^{\alpha-1}E_{\alpha,\alpha}(mt^\alpha)&+\hat{c_1}t^{\alpha-2}E_{\alpha,\alpha-1}(mt^\alpha)
\\
&+\int\limits_0^tf(t-s)s^{\alpha-1}E_{\alpha,\alpha}(ms^\alpha)ds.
\end{split}
\end{equation}

The following theorem holds true.
\begin{thm}
\label{Th1}
Assume that $1<\alpha\leq 2$ and
$0\leq\beta\le 2-\alpha$.
\begin{itemize}
    \item[\textbf{(A)}] Let $0<\gamma\leq\alpha-1$. Suppose $f\in C_{-1}^{1}[0,T]$ if $1<\alpha<2$ and $f\in C_{-1}[0,T]$ if $\alpha=2$.
Then the fractional order differential equation \eqref{fde} satisfying the homogeneous initial conditions \eqref{initc} with $\hat{c_1}=\hat{c_2}=0$ has a unique solution $u_f\in C_{-1}^{2}[0,T]$ of the form \eqref{sic}, namely, in our case we have
\begin{equation}
\label{sic-2}
u_f(t)=\int\limits_0^tf(t-s)s^{\alpha-1}E_{\alpha,\alpha}(ms^\alpha)ds.
\end{equation}

\item[\textbf{(B)}] Let $\gamma=\alpha-1$ and $f\equiv0$. Then the solution of the equation \eqref{fde} satisfying the non-homogeneous initial conditions \eqref{initc} has a unique solution $u_0$ of the following form
\begin{equation}\label{sic-3}
\begin{split}
u_0(t)=\hat{c_2}t^{\alpha-1}E_{\alpha,\alpha}(mt^\alpha)&+\hat{c_1}t^{\alpha-2}E_{\alpha,\alpha-1}(mt^\alpha).
\end{split}
\end{equation}
Moreover, we have
\begin{equation}\label{initc-2}
\begin{split}
\left.\left(\frac{t^{2-\alpha-\beta}}{\Gamma(\alpha+\beta-1)}I_{0+}^\beta [t^{\alpha-1}E_{\alpha,\alpha}(mt^\alpha)]\right)\right|_{t=0}&=0,\\
\left.\left(D_{0+}^\gamma [t^{\alpha-1}E_{\alpha,\alpha}(mt^\alpha)]\right)\right|_{t=0}&=1,
\end{split}
\end{equation}
and
\begin{equation}\label{initc-3}
\begin{split}
\left.\left(\frac{t^{2-\alpha-\beta}}{\Gamma(\alpha+\beta-1)}I_{0+}^\beta [t^{\alpha-2}E_{\alpha,\alpha-1}(mt^\alpha)]\right)\right|_{t=0}&=1,\\
\left.\left(D_{0+}^\gamma [t^{\alpha-2}E_{\alpha,\alpha-1}(mt^\alpha)]\right)\right|_{t=0}&=0.
\end{split}
\end{equation}
\end{itemize}
\end{thm}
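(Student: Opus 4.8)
The plan is to rest everything on the computations already carried out in this section: \eqref{gs} is the general solution of \eqref{fde}, \eqref{fic1} evaluates $\bigl(I_{0+}^{\beta}u\bigr)(t)$ on it, and \eqref{fic2} evaluates $\bigl(D_{0+}^{\gamma}u\bigr)(t)$ on it for $\gamma\le\alpha-1$. Both parts of the theorem then amount to imposing the two conditions of \eqref{initc} on the two free constants $C_1,C_2$ of \eqref{gs}, the only genuinely new ingredient being a regularity discussion of the functions involved.

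For part \textbf{(A)} I would treat existence and uniqueness separately. For existence, take $C_1=C_2=0$ in \eqref{gs}; the surviving term is $u_f$ from \eqref{sic-2}, which solves \eqref{fde} since it is the particular term of \eqref{gs}. Putting $C_1=C_2=0$ in \eqref{fic1} and \eqref{fic2}, the remaining integrals carry factors $s^{\alpha+\beta-1}$, resp. $s^{\alpha-\gamma-1}$, with nonnegative exponents (here one uses $\beta\le 2-\alpha$, resp. $\gamma\le\alpha-1$), so together with the prefactor $t^{2-\alpha-\beta}$ they vanish as $t\to0$; hence $u_f$ satisfies the homogeneous version of \eqref{initc}. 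The one technical point is $u_f\in C^{2}_{-1}[0,T]$ when $1<\alpha<2$: writing $u_f=f\ast G$ with $G(\tau)=\tau^{\alpha-1}E_{\alpha,\alpha}(m\tau^\alpha)$, one has $G'(\tau)=\tau^{\alpha-2}E_{\alpha,\alpha-1}(m\tau^\alpha)$ by \eqref{fdml} with $\beta=\alpha$, $\gamma=1$, which is locally integrable since $\alpha-2>-1$; hence $u_f'=f\ast G'$, and differentiating once more using $f\in C^1_{-1}$ gives $u_f''(t)=f(0)\,t^{\alpha-2}E_{\alpha,\alpha-1}(mt^\alpha)+\bigl(f'\ast G'\bigr)(t)$. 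The first term is $t^{\alpha-2}$ times a continuous function with $\alpha-2>-1$, and writing $f'(t)=t^{p}g(t)$ with $p>-1$ and $g\in C[0,T]$ the convolution behaves like $t^{p+\alpha-1}$ near $0$, so $u_f''\in C_{-1}[0,T]$. For $\alpha=2$ the equation is the classical ODE $u''-mu=f$, for which $f\in C_{-1}[0,T]$ suffices. For uniqueness, any $u\in C^{2}_{-1}[0,T]$ solving \eqref{fde} equals \eqref{gs} for some $C_1,C_2$; the homogeneous first condition together with $\beta\le2-\alpha$ forces $C_2=0$, since after the prefactor only the $C_2$-term of \eqref{fic1} survives at $t=0$ and it equals a nonzero multiple of $C_2$. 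When $1<\alpha<2$, neither $t^{\alpha-1}E_{\alpha,\alpha}(mt^\alpha)$ nor $t^{\alpha-2}E_{\alpha,\alpha-1}(mt^\alpha)$ lies in $C^{2}_{-1}[0,T]$, because their second derivatives are $\sim(\alpha-1)(\alpha-2)\Gamma(\alpha)^{-1}t^{\alpha-3}$, resp. $\sim(\alpha-2)(\alpha-3)\Gamma(\alpha-1)^{-1}t^{\alpha-4}$, near $0$, with exponents $<-1$ and nonvanishing coefficients; since $u_f\in C^{2}_{-1}$, this forces $C_1=0$ as well, so $u=u_f$. (When $\gamma=\alpha-1$, the homogeneous second condition gives $C_1=0$ directly, consistently.)

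For part \textbf{(B)} I would set $f\equiv0$, so the integral term of \eqref{gs} disappears and $u_0=C_1t^{\alpha-1}E_{\alpha,\alpha}(mt^\alpha)+C_2t^{\alpha-2}E_{\alpha,\alpha-1}(mt^\alpha)$. Imposing \eqref{initc} with $\gamma=\alpha-1$: the first condition via \eqref{fic1}, in which the $C_1$-term carries a factor $t$ and drops out, yields $C_2=\hat{c_1}$; the second condition via \eqref{fic2}, using the critical-case identities \eqref{D_alpha-1} and \eqref{D0} for the $C_2$-term so that $D_{0+}^{\alpha-1}[t^{\alpha-2}E_{\alpha,\alpha-1}(mt^\alpha)]=mt^{\alpha-1}E_{\alpha,\alpha}(mt^\alpha)\to0$, together with $D_{0+}^{\alpha-1}[t^{\alpha-1}E_{\alpha,\alpha}(mt^\alpha)]=E_{\alpha,1}(mt^\alpha)\to1$, yields $C_1=\hat{c_2}$; this gives \eqref{sic-3}. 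The coefficient system determining $(C_1,C_2)$ is triangular with nonvanishing diagonal, hence uniquely solvable. Finally, the four identities \eqref{initc-2} and \eqref{initc-3} are precisely the entries of this coefficient matrix, and each follows from one of the two computations just used: for the first-condition entries, apply \eqref{mlip} to obtain $I_{0+}^{\beta}[t^{\alpha-1}E_{\alpha,\alpha}(mt^\alpha)]=t^{\alpha+\beta-1}E_{\alpha,\alpha+\beta}(mt^\alpha)$ and $I_{0+}^{\beta}[t^{\alpha-2}E_{\alpha,\alpha-1}(mt^\alpha)]=t^{\alpha+\beta-2}E_{\alpha,\alpha+\beta-1}(mt^\alpha)$, multiply by $t^{2-\alpha-\beta}/\Gamma(\alpha+\beta-1)$ and let $t\to0$ using $E_{\alpha,\mu}(0)=1/\Gamma(\mu)$; for the second-condition entries, apply \eqref{fdml}, resp. \eqref{D_alpha-1}, and let $t\to0$.

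The step I expect to be the main obstacle is the weighted regularity bookkeeping in part \textbf{(A)}: showing that differentiating the Mittag-Leffler convolution $f\ast G$ twice keeps it in $C^{2}_{-1}[0,T]$ — which is what forces the hypothesis $f\in C^1_{-1}$ when $\alpha<2$ — and, dually, identifying exactly which linear combinations $C_1t^{\alpha-1}E_{\alpha,\alpha}(mt^\alpha)+C_2t^{\alpha-2}E_{\alpha,\alpha-1}(mt^\alpha)$ fail to belong to $C^{2}_{-1}[0,T]$; the latter is what yields uniqueness in the whole range $0<\gamma<\alpha-1$, where the second condition of \eqref{initc} alone does not determine $C_1$.
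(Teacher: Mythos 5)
Your proposal is correct in substance and follows the same route as the paper: everything rests on the general solution \eqref{gs} and the computations \eqref{fic1}, \eqref{fic2}, \eqref{D_alpha-1} carried out before the theorem. The difference is that where the paper's proof of part \textbf{(A)} is a sketch --- uniqueness is attributed to ``linearity and general arguments'' and the membership $u_f\in C_{-1}^{2}[0,T]$ together with the well-definedness of the initial functionals is delegated to Luchko--Gorenflo \cite[Theorems 2.2 and 4.1]{LG99} --- you make both steps explicit. Your regularity computation ($u_f''=f(0)\,t^{\alpha-2}E_{\alpha,\alpha-1}(mt^\alpha)+f'\ast G'$, with the convolution scaling like $t^{p+\alpha-1}$) is exactly what the citation encapsulates and correctly isolates why $f\in C_{-1}^{1}$ is needed for $1<\alpha<2$. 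More importantly, you supply the uniqueness mechanism the paper leaves implicit: for $0<\gamma<\alpha-1$ neither condition in \eqref{initc} constrains $C_1$ (the paper itself observes this just before the theorem), so uniqueness can only come from the solution class, and your verification that $t^{\alpha-1}E_{\alpha,\alpha}(mt^\alpha)\notin C_{-1}^{2}[0,T]$ for $1<\alpha<2$ (second derivative $\sim t^{\alpha-3}$ with $\alpha-3<-1$) is the right argument. Part \textbf{(B)} matches the paper's ``previous calculations'' verbatim.

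Two soft spots. First, your class-membership argument does not cover $\alpha=2$ with $0<\gamma<1$, a case the statement of part \textbf{(A)} allows: there $t\,E_{2,2}(mt^{2})=\sinh(\sqrt{m}\,t)/\sqrt{m}$ is entire, hence in $C_{-1}^{2}[0,T]$, and neither condition of \eqref{initc} determines $C_1$, so uniqueness genuinely fails; your proof breaks exactly where the statement does, and this should be flagged rather than passed over. Second, in verifying the first line of \eqref{initc-3} your own recipe gives
\begin{equation*}
\frac{t^{2-\alpha-\beta}}{\Gamma(\alpha+\beta-1)}\,t^{\alpha+\beta-2}E_{\alpha,\alpha+\beta-1}(mt^\alpha)\;\longrightarrow\;\frac{1}{\Gamma(\alpha+\beta-1)^{2}},
\end{equation*}
which equals $1$ only when $\beta=2-\alpha$. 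This normalisation slip is inherited from the paper (the same factor is silently dropped in the display that yields $C_2=\hat{c_1}$) and does not affect the homogeneous part \textbf{(A)}, but it does rescale the constants in \eqref{sic-3} and \eqref{initc-3}; it is worth recording rather than reproducing.
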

\begin{proof} \textbf{(A)} Here, we briefly give some ideas of the proof. First of all, we note that the uniqueness of the solution to the equation \eqref{fde} with the initial conditions \eqref{initc} follows from the linearity of the equation in combination with the general arguments. The representation \eqref{sic-2} of the formal solution $u$ yields the existence. From the paper of Luchko and Gorenflo \cite[Theorem 4.1]{LG99} it follows that the formal solution \eqref{sic-2} $u$ is from $C_{-1}^{2}[0,T]$. The last but not less important thing is that the functionals generating the initial conditions \eqref{initc} are well-defined due to \cite[Theorem 2.2]{LG99}.

\textbf{(B)} The existence and uniqueness results are clear. The properties \eqref{initc-2} and \eqref{initc-3} follow from the previous calculations.
\end{proof}

\begin{rem}
Let $0\leq\gamma<\alpha-1$. Then for
$$
f(t)=t^{\alpha-2}E_{\alpha,\alpha-1}(mt^\alpha),
$$
we have
$$
\left.\left(D_{0+}^\gamma [f(t)]\right)\right|_{t=0}=\infty.
$$
\end{rem}

\begin{rem}
If $\beta=2-\alpha,\,\gamma=\alpha-1$, then (\ref{initc}) yields the classical initial conditions for the equation (\ref{fde}).
\end{rem}

\subsection{Partial differential equations with the Cauchy type data}

Here we consider the Cauchy type problem for the time--fractional wave equation
\begin{equation}\label{C1: fde}
\left(D_{0+}^\alpha u\right)(t) + \mathcal A u(t)=f(t),\,\,\, 0<t<T,
\end{equation}
with initial type conditions
\begin{equation}\label{C1: initc}
\left.\left(\frac{t^{2-\alpha-\beta}}{\Gamma(\alpha+\beta-1)}I_{0+}^\beta u\right)(t)\right|_{t=0}=u_{1},\,\left.\left(D_{0+}^\gamma u\right)(t)\right|_{t=0}=u_{2},
\end{equation}
where $1<\alpha\leq 2$, $\mathcal A$ is a self--adjoint operator on the separable Hilbert space $\mathbb H$, $u_1$ and $u_2$ are the initial functions.

\begin{assumption}
\label{Assumption_Sp-Oper}
We assume that the operator $\mathcal A$ has a positive discrete spectrum $\{m_{\xi}\}_{\xi\in\mathbb N}$ such that
$$
\inf\limits_{\xi\in\mathbb N} m_\xi>0
$$
with the corresponding system of eigenfunctions $\{e_{\xi}\}_{\xi\in\mathbb N}$ forming an orthonormal basis in $\mathbb H$. We denote by $(\cdot, \cdot)_{\mathbb H}$ inner product of the Hilbert space $\mathbb H.$
\end{assumption}

Let us introduce $\chi(t)=t^{2(2-\alpha)}$ for $t\in[0, 1)$ and $\chi(t)=1$ for $t\in[1, T]$. Then, we define
$$
\|u\|_{L_{\chi}^{2}(0, T; \, \mathbb H)}:=\left(\int\limits_{0}^{T}\|u(s)\|_{\mathbb H}^{2} \chi(s) ds \right)^{1/2},
$$
where $\|\cdot\|_{\mathbb H}$ is the norm of the Hilbert space $\mathbb H$.

\begin{thm}
\label{Th_appl_1}
Let $1<\alpha\leq2$. Let $\gamma = \alpha-1$ and $0\leq\beta\le 2-\alpha$.
\begin{itemize}
    \item[\textbf{(A)}] Assume that $f\in L^{2}(0, T; \, \mathbb H)$. Suppose that $u_1, u_2\in\mathbb H$. Then the Cauchy type problem for the time-fractional wave equation \eqref{C1: fde}--\eqref{C1: initc} has a unique solution $u\in L_{\chi}^{2}(0, T; \, \mathbb H)$ such that $\mathcal A u, D_{0+}^\alpha u \in L_{\chi}^{2}(0, T; \, \mathbb H)$ in the form
\begin{equation*}
\begin{split}
u(t)=t^{\alpha-2}&\sum\limits_{\xi\in\mathbb N}u_{1\xi}E_{\alpha,\alpha-1}(-m_{\xi}t^\alpha)e_{\xi} + t^{\alpha-1}\sum\limits_{\xi\in\mathbb N}u_{2\xi}E_{\alpha,\alpha}(-m_{\xi}t^\alpha)e_{\xi}
\\
&+\sum\limits_{\xi\in\mathbb N}\left[\int\limits_0^t s^{\alpha-1}f_{\xi}(t-s)E_{\alpha,\alpha}(-m_{\xi}s^\alpha) ds\right]e_{\xi},
\end{split}
\end{equation*}
where
$$
f_{\xi}=(f, e_\xi)_{\mathbb H}, \,\,\,u_{k\xi}=(u_k, e_\xi)_{\mathbb H}
$$
for $k=1, 2$, for all $\xi\in\mathbb N$.

    \item[\textbf{(B)}]
    Assume that $f\in C_{-1}([0,T]; \mathbb{H})$ if $\alpha=2$, and $f\in C_{-1}^{1}([0,T]; \mathbb{H})$ if $1<\alpha<2$. Suppose that $u_1\equiv0$ and $u_2\equiv0$. Then the Cauchy type problem for the time-fractional wave equation \eqref{C1: fde}--\eqref{C1: initc} has a unique solution $u_f\in C_{-1}^{2}([0, T]; \, \mathbb H)$ such that $\mathcal A u_f \in C_{-1}([0, T]; \, \mathbb H)$.
\end{itemize}
\end{thm}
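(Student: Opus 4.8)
The plan is to reduce Theorem~\ref{Th_appl_1} to the scalar Theorem~\ref{Th1} by spectral decomposition, and then to reassemble the series solution while checking convergence in the weighted spaces $L^2_\chi$ and $C^2_{-1}$. First I would expand everything in the eigenbasis $\{e_\xi\}$: writing $u(t)=\sum_\xi u_\xi(t)e_\xi$, $f(t)=\sum_\xi f_\xi(t)e_\xi$, the equation \eqref{C1: fde} together with the fact that $\mathcal A e_\xi=m_\xi e_\xi$ decouples into the countable family of scalar equations $(D_{0+}^\alpha u_\xi)(t)+m_\xi u_\xi(t)=f_\xi(t)$, i.e. \eqref{fde} with $m$ replaced by $-m_\xi$. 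Likewise the conditions \eqref{C1: initc} project to the scalar conditions \eqref{initc} with $\hat c_1=u_{1\xi}$, $\hat c_2=u_{2\xi}$. Theorem~\ref{Th1} (parts (A) and (B), combined by linearity) then gives for each $\xi$ the unique scalar solution
$$
u_\xi(t)=u_{2\xi}\,t^{\alpha-1}E_{\alpha,\alpha}(-m_\xi t^\alpha)+u_{1\xi}\,t^{\alpha-2}E_{\alpha,\alpha-1}(-m_\xi t^\alpha)+\int_0^t f_\xi(t-s)s^{\alpha-1}E_{\alpha,\alpha}(-m_\xi s^\alpha)\,ds,
$$
which is exactly the $\xi$-th coefficient in the claimed formula. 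Uniqueness for the operator problem follows because any solution must have coefficients solving the scalar problems, and those are unique.

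For part (A), the core estimate is the decay bound for the Mittag-Leffler function. Since $m_\xi>0$ we have $-m_\xi t^\alpha\in(-\infty,0]$, so $\arg(-m_\xi t^\alpha)=\pi$, and Lemma~\ref{C1: PrML} applies (with $0<\alpha<2$; the case $\alpha=2$ is handled separately using the elementary bound for $E_{2,\beta}$, which are trigonometric/hyperbolic functions, or by noting the spectrum stays bounded away from $0$). Thus $|E_{\alpha,\alpha}(-m_\xi t^\alpha)|\le C/(1+m_\xi t^\alpha)$ and similarly for $E_{\alpha,\alpha-1}$. I would then estimate each of the three series in $L^2_\chi(0,T;\mathbb H)$. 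The weight $\chi(t)=t^{2(2-\alpha)}$ near $t=0$ precisely compensates the $t^{\alpha-2}$ singularity in the first term: $\int_0^1 |t^{\alpha-2}E_{\alpha,\alpha-1}(-m_\xi t^\alpha)|^2\,t^{2(2-\alpha)}\,dt=\int_0^1|E_{\alpha,\alpha-1}(-m_\xi t^\alpha)|^2\,dt\le C$ uniformly in $\xi$, so summing against $\sum_\xi|u_{1\xi}|^2=\|u_1\|_{\mathbb H}^2<\infty$ gives the bound; the second term is even easier since $t^{\alpha-1}$ is bounded on $[0,T]$, and the Duhamel term is controlled by a Young-type convolution inequality using $f\in L^2(0,T;\mathbb H)$ together with $\int_0^T|s^{\alpha-1}E_{\alpha,\alpha}(-m_\xi s^\alpha)|\,ds\le C$ uniformly in $\xi$. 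To get $\mathcal A u\in L^2_\chi$ one uses the \emph{gain} in the estimate: $m_\xi|E_{\alpha,\alpha}(-m_\xi t^\alpha)|\le C m_\xi/(1+m_\xi t^\alpha)\le C t^{-\alpha}$, and the extra powers of $t$ in each term absorb this singularity after multiplication by $\chi$; then $D_{0+}^\alpha u=f-\mathcal A u$ lies in $L^2_\chi$ as well. A point needing care is justifying that the termwise-differentiated series really represents $\mathcal A u$ and $D_{0+}^\alpha u$ — this follows from the uniform (in $\xi$) integrability bounds, which give convergence of the partial sums in $L^2_\chi$ and hence identification of the limits.

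For part (B), the argument is the same in spirit but in the spaces $C^k_{-1}$. Theorem~\ref{Th1}(A) gives, for each $\xi$, that the scalar Duhamel solution $u_{f,\xi}$ lies in $C^2_{-1}[0,T]$ with $\mathcal A$ acting on it (i.e. $m_\xi u_{f,\xi}$) in $C_{-1}[0,T]$, and by inspecting the proof of \cite[Theorem 4.1]{LG99} one extracts the quantitative bound $\|m_\xi u_{f,\xi}\|_{C_{-1}}\le C\|f_\xi\|_{C^1_{-1}}$ (or $\|f_\xi\|_{C_{-1}}$ when $\alpha=2$) with $C$ independent of $\xi$ — here again the decay $m_\xi|E_{\alpha,\alpha}(-m_\xi s^\alpha)|\le C s^{-\alpha}$ is what makes the constant uniform, since $s^{\alpha-1}\cdot s^{-\alpha}=s^{-1}$ is integrable against $C_{-1}$ data. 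Summing over $\xi$ against $\sum_\xi\|f_\xi\|^2<\infty$ (after reformulating $C_{-1}([0,T];\mathbb H)$ appropriately) yields $u_f\in C^2_{-1}([0,T];\mathbb H)$ with $\mathcal A u_f\in C_{-1}([0,T];\mathbb H)$, and uniqueness is inherited from the scalar case.

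The main obstacle I expect is obtaining the uniform-in-$\xi$ bounds in the $C_{-1}$ setting of part (B): the space $C_{-1}$ is defined through a factorization $f(x)=x^p f_1(x)$ with $f_1$ continuous and $p>-1$ possibly depending on $f$, so one must either fix a uniform exponent across all $f_\xi$ or argue more carefully that the relevant operators (fractional integration against the Mittag-Leffler kernel, and two ordinary derivatives) are bounded on $C_{-1}$ with norms controlled purely by $m_\xi$ via the decay estimate; pinning down the right quantitative version of \cite[Theorem 4.1]{LG99} and \cite[Theorem 2.2]{LG99} with constants independent of the spectral parameter is the delicate part. The $L^2_\chi$ part (A) is comparatively routine once the weight is seen to be tailored to the $t^{\alpha-2}$ singularity.
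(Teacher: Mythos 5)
Your proposal follows essentially the same route as the paper: spectral decomposition in the eigenbasis of $\mathcal A$, reduction to the scalar problem of Theorem \ref{Th1} for each mode, the Mittag-Leffler decay bound of Lemma \ref{C1: PrML} together with the weight $\chi(t)=t^{2(2-\alpha)}$ to absorb the $t^{\alpha-2}$ singularity, Cauchy--Schwarz on the Duhamel term, and the identity $D_{0+}^\alpha u=f-\mathcal A u$ for the remaining norm. The points you flag as delicate (the case $\alpha=2$ in the asymptotic lemma, and uniformity in $\xi$ of the $C_{-1}$ bounds in part (B)) are in fact passed over silently in the paper's proof, so your treatment is, if anything, slightly more careful.
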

\begin{proof} \textbf{(A)} Let us seek a solution of the problem \eqref{C1: fde}--\eqref{C1: initc} in the following form
$$
u(t)=\sum\limits_{\xi\in\mathbb N} u_{\xi}(t) e_{\xi},
$$
with the source term and initial data
$$
f(t)=\sum\limits_{\xi\in\mathbb N} f_{\xi}(t) e_{\xi}, \,\,\, u_{1}=\sum\limits_{\xi\in\mathbb N} u_{1\xi} e_{\xi}, \,\,\, u_{2}=\sum\limits_{\xi\in\mathbb N} u_{2\xi} e_{\xi},
$$
respectively.

By putting them into the equation \eqref{C1: fde}, we obtain a set of the Cauchy type problems for each $\xi\in\mathbb N$:
\begin{equation}\label{C1: EQ1}
\left(D_{0+}^\alpha u_{\xi} \right)(t) + m_{\xi} u_{\xi}(t)=f_{\xi}(t),\,\,\, t>0,
\end{equation}
with initial type conditions
\begin{equation}\label{C1: CP1}
\left.\left(\frac{t^{2-\alpha-\beta}}{\Gamma(\alpha+\beta-1)} I_{0+}^\beta u_{\xi}\right)(t)\right|_{t=0}=u_{1\xi},\,\left.\left(D_{0+}^\gamma u_{\xi}\right)(t)\right|_{t=0}=u_{2\xi},
\end{equation}
where $1<\alpha\leq 2$, $m_{\xi}$ is an eigenvalue of the self--adjoint operator $\mathcal A$ corresponding to the eigenfunction $e_{\xi}$ for all $\xi\in\mathbb N$, and $\beta,\,\gamma$ are given real numbers such that $0\leq\beta<1,\,0<\gamma\leq 1$.

A general solution of the equation \eqref{C1: EQ1} satisfying the initial type conditions \eqref{C1: CP1} at $\gamma=\alpha-1$ has the form
\begin{equation}\label{C1: preSol1}
\begin{split}
 u_{\xi}(t)=u_{2\xi}t^{\alpha-1}&E_{\alpha,\alpha}(-m_{\xi}t^\alpha)+u_{1\xi}t^{\alpha-2}E_{\alpha,\alpha-1}(-m_{\xi}t^\alpha)\\
&+\int\limits_0^t f_{\xi}(t-s)s^{\alpha-1}E_{\alpha,\alpha}(-m_{\xi}s^\alpha)ds.
\end{split}
\end{equation}

Thus, we have
\begin{equation}\label{C1: Sol1}
\begin{split}
 u(t)&=t^{\alpha-2}\sum\limits_{\xi\in\mathbb N}u_{1\xi}E_{\alpha,\alpha-1}(-m_{\xi}t^\alpha)e_{\xi} \\
 &+ t^{\alpha-1}\sum\limits_{\xi\in\mathbb N}u_{2\xi}E_{\alpha,\alpha}(-m_{\xi}t^\alpha)e_{\xi}\\
&+\int\limits_0^t s^{\alpha-1} \left[\sum\limits_{\xi\in\mathbb N}f_{\xi}(t-s)E_{\alpha,\alpha}(-m_{\xi}s^\alpha)e_{\xi}\right] ds.
\end{split}
\end{equation}
Now, by taking $\mathbb H$-norm, for all $t\in[0, T]$ we obtain
\begin{equation*}
\begin{split}
\|u(t)\|_{\mathbb H}^{2} &\leq C \sum\limits_{\xi\in\mathbb N}|t^{\alpha-2}E_{\alpha,\alpha-1}(-m_{\xi}t^\alpha)|^{2}  |u_{1\xi}|^{2} \\
 &+ \sum\limits_{\xi\in\mathbb N}|t^{\alpha-1}E_{\alpha,\alpha}(-m_{\xi}t^\alpha)|^{2}  |u_{2\xi}|^{2} \\
&+\sum\limits_{\xi\in\mathbb N}\left|\int\limits_0^t |s^{\alpha-1} E_{\alpha,\alpha}(-m_{\xi}s^\alpha)| |f_{\xi}(t-s)| d s \right|^{2}
\end{split}
\end{equation*}
\begin{equation}\label{C1: Sol1-mod}
\begin{split}
&\leq C \sum\limits_{\xi\in\mathbb N}\left(\frac{t^{\alpha-2}}{1+m_{\xi}t^\alpha}\right)^{2} |u_{1\xi}|^{2}  + C \sum\limits_{\xi\in\mathbb N}\left(\frac{t^{\alpha-1}}{1+m_{\xi}t^\alpha}\right)^{2} |u_{2\xi}|^{2}\\
& + C \sum\limits_{\xi\in\mathbb N}\left(\int\limits_0^t \frac{s^{\alpha-1}}{1+m_{\xi}s^\alpha} |f_{\xi}(t-s)| d s\right)^{2}
\\
&\leq C \sum\limits_{\xi\in\mathbb N}\left(\frac{t^{\alpha-2}}{1+m_{\xi}t^\alpha}\right)^{2} |u_{1\xi}|^{2}  + C \sum\limits_{\xi\in\mathbb N}\left(\frac{t^{\alpha-1}}{1+m_{\xi}t^\alpha}\right)^{2} |u_{2\xi}|^{2}\\
& + C \sum\limits_{\xi\in\mathbb N}\int\limits_0^t \left(\frac{s^{\alpha-1}}{1+m_{\xi}s^\alpha}\right)^{2}  d s
\int\limits_0^t |f_{\xi}(t-s)|^{2} d s.
\end{split}
\end{equation}
Recall that $\chi(t)=t^{2(2-\alpha)}$ for $t\in[0, 1)$ and $\chi(t)=1$ for $t\in[1, T]$. Then, by using
$$
\frac{t^{\alpha-k}}{1+m_{\xi}t^\alpha}=\frac{1}{t^{k-\alpha}+m_{\xi}t^k}\leq\frac{C}{1+t^k},
$$
for $t\geq1$, for $k=1,2$, for some constant $C>0$, we get
$$
\chi(t)\|u(t)\|_{\mathbb H}^{2} \leq C\chi(t)\left( \sum\limits_{\xi\in\mathbb N}\left(\frac{t^{\alpha-2}}{1+m_{\xi}t^\alpha}\right)^{2} |u_{1\xi}|^{2}  +  \sum\limits_{\xi\in\mathbb N}\left(\frac{t^{\alpha-1}}{1+m_{\xi}t^\alpha}\right)^{2} |u_{2\xi}|^{2}\right)
$$
$$
+ C\chi(t) \sum\limits_{\xi\in\mathbb N}\int\limits_0^t \left(\frac{s^{\alpha-1}}{1+m_{\xi}s^\alpha}\right)^{2}  d s
\int\limits_0^t |f_{\xi}(t-s)|^{2} d s
$$
\begin{equation}\label{C1: Sol1-mod-weight}
\begin{split}
\leq C\left(
\sum\limits_{\xi\in\mathbb N} |u_{1\xi}|^{2}  + 
\sum\limits_{\xi\in\mathbb N} |u_{2\xi}|^{2} +
\int\limits_0^t \sum\limits_{\xi\in\mathbb N} |f_{\xi}(s)|^{2} d s \right),
\end{split}
\end{equation}
for all  $t\in[0, T]$. Thus, we have
\begin{equation}\label{C1: Sol1-L2(H)}
\begin{split}
\|u\|_{L_{\chi}^{2}(0, T; \, \mathbb H)}^{2}&:=\int\limits_{0}^{T}\|u(s)\|_{\mathbb H}^{2} \chi(s) ds
\\
& \leq C\left(\|u_{1}\|_{\mathbb H}^{2} + \|u_{2}\|_{\mathbb H}^{2} + \|f\|_{L^{2}(0, T; \, \mathbb H)}^{2} \right).
\end{split}
\end{equation}

Now, for the second term of the equation \eqref{C1: fde} we have
\begin{equation}\label{C1: Sol1-A}
\begin{split}
 \mathcal A u(t)&=t^{\alpha-2}\sum\limits_{\xi\in\mathbb N}u_{1\xi}E_{\alpha,\alpha-1}(-m_{\xi}t^\alpha) m_{\xi} e_{\xi} \\
 &+ t^{\alpha-1}\sum\limits_{\xi\in\mathbb N}u_{2\xi}E_{\alpha,\alpha}(-m_{\xi}t^\alpha)m_{\xi} e_{\xi}\\
&+\int\limits_0^t s^{\alpha-1} \left[\sum\limits_{\xi\in\mathbb N}f_{\xi}(t-s)E_{\alpha,\alpha}(-m_{\xi}s^\alpha) m_{\xi} e_{\xi}\right] ds.
\end{split}
\end{equation}
By taking $\mathbb H$-norm, we get
\begin{equation}\label{C1: Sol1-mod-A}
\begin{split}
\|\mathcal A u(t)\|_{\mathbb H}^{2} &\leq C \sum\limits_{\xi\in\mathbb N}|t^{\alpha-2} m_{\xi} E_{\alpha,\alpha-1}(-m_{\xi}t^\alpha)|^{2}  |u_{1\xi}|^{2} \\
 &+ \sum\limits_{\xi\in\mathbb N}|t^{\alpha-1} m_{\xi} E_{\alpha,\alpha}(-m_{\xi}t^\alpha)|^{2}  |u_{2\xi}|^{2} \\
&+\sum\limits_{\xi\in\mathbb N}\left|\int\limits_0^t |s^{\alpha-1}  m_{\xi} E_{\alpha,\alpha}(-m_{\xi}s^\alpha)| |f_{\xi}(t-s)| d s \right|^{2}
\\
&\leq C \sum\limits_{\xi\in\mathbb N}\left(\frac{m_{\xi} t^{\alpha-2}}{1+m_{\xi}t^\alpha}\right)^{2} |u_{1\xi}|^{2}  + C \sum\limits_{\xi\in\mathbb N}\left(\frac{m_{\xi} t^{\alpha-1}}{1+m_{\xi}t^\alpha}\right)^{2} |u_{2\xi}|^{2}\\
& + C \sum\limits_{\xi\in\mathbb N}\int\limits_0^t \left(\frac{m_{\xi} s^{\alpha-1}}{1+m_{\xi}s^\alpha}\right)^{2}d s \int\limits_0^t |f_{\xi}(t-s)|^{2} d s.
\end{split}
\end{equation}

Then, we obtain
\begin{equation}\label{C1: Sol1-L2(H)-A}
\begin{split}
\|\mathcal A u\|_{L_{\chi}^{2}(0, T; \, \mathbb H)}^{2}& \leq C\left(\|u_{1}\|_{\mathbb H}^{2} + \|u_{2}\|_{\mathbb H}^{2} + \|f\|_{L^{2}(0, T; \, \mathbb H)}^{2} \right).
\end{split}
\end{equation}

Finally, in a similar way, one can show that
\begin{equation}\label{C1: Sol1-L2(H)-U_t}
\begin{split}
\|D_{0+}^\alpha u\|_{L_{\chi}^{2}(0, T; \, \mathbb H)}^{2}& \leq C\left(\|u_{1}\|_{\mathbb H}^{2} + \|u_{2}\|_{\mathbb H}^{2} + \|f\|_{L^{2}(0, T; \, \mathbb H)}^{2} \right).
\end{split}
\end{equation}

\textbf{(B)} The proof of this part comes from combination of Theorem \ref{Th1} and from the arguments of Part \textbf{(A)}.
\end{proof}

\section{Inner value problem}

In this Section we study a non-local type Cauchy problem. We consider the equation \eqref{fde} with the following non-local conditions
\begin{equation}
\label{innc}
\begin{split}
\left.\left(I_{0+}^\beta u\right)(t)\right|_{t=a}=\hat{d_1},\,\,0\leq\beta\leq2-\alpha,
\\
\left.\left(D_{0+}^\gamma u\right)(t)\right|_{t=a}=\hat{d_2},\,\, 0<\gamma\leq\alpha-1,
\end{split}
\end{equation}
instead of the Cauchy type initial data \eqref{initc}. Here $0<a<T$, $\hat{d_1}$ and $\hat{d_2}$ are given real numbers.

Now we are in a position to construct a solution of the time-fractional equation \eqref{fde} satisfying the non-local conditions \eqref{innc}. Using \eqref{fic1} and \eqref{fic2}, we find
\begin{equation}
\label{finc1}
\begin{split}
\left.\left(I_{0+}^{\beta} u\right)(t)\right|_{t=a}&=C_1a^{\alpha+\beta-1}E_{\alpha, \alpha+\beta}(ma^\alpha)+C_2a^{\alpha+\beta-2}E_{\alpha, \alpha+\beta-1}(ma^\alpha)
\\
&+\int\limits_0^af(a-s)s^{\alpha+\beta-1}E_{\alpha,\alpha+\beta}(ms^\alpha)ds=\hat{d_1},
\end{split}
\end{equation}
\begin{equation}\label{finc2}
\begin{split}
\left.\left(D_{0+}^\gamma u\right)(t)\right|_{t=a}
&=C_1a^{\alpha-\gamma-1}E_{\alpha,\alpha-\gamma}(ma^\alpha)+C_2a^{\alpha-\gamma-2}E_{\alpha,\alpha-1-\gamma}(ma^\alpha)\\
&+\int\limits_0^af(a-s)s^{\alpha-\gamma-1}E_{\alpha,\alpha-\gamma}(ms^\alpha)ds=\hat{d_2}.
\end{split}
\end{equation}
The equalities (\ref{finc1})-(\ref{finc2}) form a system of algebraic equations with respect to unknown constants $C_1$ and $C_2$. This system will have a solution if
\begin{equation}\label{inncon}
E_{\alpha,\alpha+\beta}(ma^\alpha)E_{\alpha,\alpha-1-\gamma}(ma^\alpha)\neq E_{\alpha,\alpha+\beta-1}(ma^\alpha)E_{\alpha,\alpha-\gamma}(ma^\alpha).
\end{equation}
If this holds, a solution of the problem (\ref{fde}), (\ref{innc}) can be written as
\begin{equation}
\label{inns}
\begin{split}
u(t)&=\hat d_1\hat{E_1}(t)+\hat d_2\hat{E_2}(t)
\\
&+\int\limits_0^af(a-s)s^{\alpha-1}\left[s^\beta E_{\alpha,\alpha+\beta}(ms^\alpha)\hat{E_1}(m t)-s^{-\gamma}E_{\alpha,\alpha-\gamma}(ms^\alpha)\hat{E_2}(m t)\right]ds
\\
&+\int\limits_0^tf(t-s)s^{\alpha-1}E_{\alpha,\alpha}(ms^\alpha)ds,
\end{split}
\end{equation}
where
\begin{equation}
\label{eve}
\begin{split}
\hat{E_1}(m t)&=\dfrac{t^{\alpha-2}\left[tE_{\alpha,\alpha-1-\gamma}(ma^\alpha)E_{\alpha,\alpha}(mt^\alpha)-aE_{\alpha,\alpha-\gamma}(ma^\alpha)E_{\alpha,\alpha-1}(mt^\alpha)\right]}{a^{\alpha+\beta-1}\left[E_{\alpha,\alpha+\beta}(ma^\alpha)E_{\alpha,\alpha-1-\gamma}(ma^\alpha)- E_{\alpha,\alpha+\beta-1}(ma^\alpha)E_{\alpha,\alpha-\gamma}(ma^\alpha)\right]},
\\
\hat{E_2}(m t)&=\dfrac{t^{\alpha-2}\left[-tE_{\alpha,\alpha-1+\beta}(ma^\alpha)E_{\alpha,\alpha}(mt^\alpha)+aE_{\alpha,\alpha+\beta}(ma^\alpha)E_{\alpha,\alpha-1}(mt^\alpha)\right]}{a^{\alpha-\gamma-1}\left[E_{\alpha,\alpha+\beta}(ma^\alpha)E_{\alpha,\alpha-1-\gamma}(ma^\alpha)- E_{\alpha,\alpha+\beta-1}(ma^\alpha)E_{\alpha,\alpha-\gamma}(ma^\alpha)\right]}.
\end{split}
\end{equation}

These observations in combination with Theorem \ref{Th1} give the following result:
\begin{thm}
\label{Th2}
Assume that $1<\alpha\leq 2$ and
$0\leq\beta\le 2-\alpha$. Let $0<\gamma\leq\alpha-1$. Also, assume that the condition \eqref{inncon} holds.
\begin{itemize}
    \item[\textbf{(A)}] Suppose $f\in C_{-1}^{1}[0,T]$ if $1<\alpha<2$ and $f\in C_{-1}[0,T]$ if $\alpha=2$.
Then the fractional order differential equation \eqref{fde} satisfying the homogeneous non-local initial conditions \eqref{innc} with $\hat{d_1}=\hat{d_2}=0$ has a unique solution $u_f\in C_{-1}^{2}[0,T]$ of the form \eqref{inns}, namely, we have
\begin{equation}
\label{inns-2}
\begin{split}
u_f(t)&=-\int\limits_0^af(a-s)s^{\alpha-1}\left[s^\beta E_{\alpha,\alpha+\beta}(ms^\alpha)\hat{E_1}(t)+s^{-\gamma}E_{\alpha,\alpha-\gamma}(ms^\alpha)\hat{E_2}(t)\right]ds
\\
&+\int\limits_0^tf(t-s)s^{\alpha-1}E_{\alpha,\alpha}(ms^\alpha)ds.
\end{split}
\end{equation}

\item[\textbf{(B)}] Let $f\equiv0$. Then the solution of the homogeneous equation \eqref{fde} satisfying the non-homogeneous non-local initial conditions \eqref{innc} has a unique solution $u_0$ of the following form
\begin{equation}\label{inns-3}
\begin{split}
u_0(t)=\hat d_1\hat{E_1}(mt)+\hat d_2\hat{E_2}(mt).
\end{split}
\end{equation}
Moreover, we have
\begin{equation}
\label{innc-2}
\begin{split}
\left.\left(I_{0+}^\beta [\hat{E_1}(mt)]\right)\right|_{t=a}=1,\,\,
\left.\left(D_{0+}^\gamma [\hat{E_1}(mt)]\right)\right|_{t=a}=0,
\end{split}
\end{equation}
and
\begin{equation}
\label{innc-3}
\begin{split}
\left.\left(I_{0+}^\beta [\hat{E_2}(mt)]\right)\right|_{t=a}=0,\,\,
\left.\left(D_{0+}^\gamma [\hat{E_2}(mt)]\right)\right|_{t=a}=1.
\end{split}
\end{equation}
\end{itemize}
\end{thm}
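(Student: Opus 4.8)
The plan is to reduce both parts to the $2\times2$ linear system obtained by substituting the general solution \eqref{gs} of \eqref{fde} into the non-local conditions \eqref{innc}, and then to transfer the required regularity from Theorem \ref{Th1}. Concretely: since $0\le\beta\le 2-\alpha$ and $0<\gamma\le\alpha-1$, the identities \eqref{fic1} and \eqref{fic2} for $(I_{0+}^{\beta}u)(t)$ and $(D_{0+}^{\gamma}u)(t)$ apply verbatim to \eqref{gs}; evaluating them at $t=a$ and imposing \eqref{innc} produces exactly the system \eqref{finc1}--\eqref{finc2} in $C_1,C_2$, whose coefficient determinant equals
\[
a^{2\alpha+\beta-\gamma-3}\left[E_{\alpha,\alpha+\beta}(ma^\alpha)E_{\alpha,\alpha-1-\gamma}(ma^\alpha)-E_{\alpha,\alpha+\beta-1}(ma^\alpha)E_{\alpha,\alpha-\gamma}(ma^\alpha)\right].
\]
Since $a>0$, hypothesis \eqref{inncon} is precisely the non-vanishing of this determinant, so $(C_1,C_2)$ is uniquely determined by Cramer's rule. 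Substituting back into \eqref{gs} and regrouping --- collecting the $\hat d_j$-terms into $\hat E_1,\hat E_2$ as in \eqref{eve} and isolating the free convolution $\int_0^t f(t-s)s^{\alpha-1}E_{\alpha,\alpha}(ms^\alpha)\,ds$ --- gives \eqref{inns}. As \eqref{gs} is the general solution of \eqref{fde} by \cite{Kilbas}, this already yields existence and uniqueness of a solution of the asserted form, and the two parts follow by specialising.

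For \textbf{(A)} I would set $\hat d_1=\hat d_2=0$, so that \eqref{inns} reduces to \eqref{inns-2}. The genuinely $t$-dependent convolution there is exactly the solution produced in Theorem \ref{Th1}\textbf{(A)}, hence belongs to $C_{-1}^{2}[0,T]$ under the present hypotheses on $f$ by \cite[Theorem 4.1]{LG99}. The remaining summand is a constant --- a finite integral of $f$ over $[0,a]$ against the kernels $s^{\alpha+\beta-1}E_{\alpha,\alpha+\beta}(ms^\alpha)$ and $s^{\alpha-\gamma-1}E_{\alpha,\alpha-\gamma}(ms^\alpha)$, which are bounded near $s=0$ since $\alpha+\beta-1\ge0$ and $\alpha-\gamma-1\ge0$, while the $(a-s)^p$-type behaviour of $f(a-s)$ at $s=a$ is integrable because $p>-1$ --- multiplied by $\hat E_1$, resp.\ $\hat E_2$. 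By \eqref{eve} these are finite linear combinations of $t^{\alpha-1}E_{\alpha,\alpha}(mt^\alpha)$ and $t^{\alpha-2}E_{\alpha,\alpha-1}(mt^\alpha)$, and I would deduce their membership in the relevant space from the series \eqref{mlf} together with the structure of the spaces $C_\gamma^m$ in \cite{LG99}; the functionals defining \eqref{innc} are meaningful there by \cite[Theorem 2.2]{LG99}.

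For \textbf{(B)} I would set $f\equiv0$ (equivalently, solve \eqref{finc1}--\eqref{finc2} with the integral terms removed), which gives \eqref{inns-3}; existence and uniqueness are again immediate from the non-degeneracy of the system. To prove \eqref{innc-2}--\eqref{innc-3} I would read off from \eqref{eve} the coefficients of $t^{\alpha-1}E_{\alpha,\alpha}(mt^\alpha)$ and $t^{\alpha-2}E_{\alpha,\alpha-1}(mt^\alpha)$ inside $\hat E_1$ and $\hat E_2$, insert these into \eqref{fic1} and \eqref{fic2} with $f\equiv0$, and evaluate at $t=a$; for $\hat E_1$ the $I_{0+}^\beta$-condition collapses to $E_{\alpha,\alpha+\beta}(ma^\alpha)E_{\alpha,\alpha-1-\gamma}(ma^\alpha)-E_{\alpha,\alpha+\beta-1}(ma^\alpha)E_{\alpha,\alpha-\gamma}(ma^\alpha)$ divided by itself, i.e.\ $1$, while the $D_{0+}^\gamma$-condition collapses to a difference of two equal terms, i.e.\ $0$, and symmetrically for $\hat E_2$. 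Thus \eqref{innc-2}--\eqref{innc-3} hold by the very way $\hat E_1,\hat E_2$ were constructed.

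The linear-algebra core of the argument is routine once \eqref{fic1}--\eqref{fic2} and the Mittag-Leffler identities \eqref{mlip}, \eqref{fdml} are available. The step I expect to be the real obstacle is the regularity claim in \textbf{(A)} --- verifying that \eqref{inns-2} genuinely lies in $C_{-1}^{2}[0,T]$ and that the conditions \eqref{innc} are well posed on that space --- since this hinges on the fine properties of $C_\gamma^m$ and of the Mittag-Leffler-type building blocks established in \cite{LG99}, rather than on the elementary computations above.
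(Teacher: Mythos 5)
Your proposal is correct and follows essentially the same route as the paper: the paper's ``proof'' of Theorem \ref{Th2} consists precisely of the preceding derivation of the system \eqref{finc1}--\eqref{finc2}, the solvability condition \eqref{inncon}, and the formulas \eqref{inns}, \eqref{eve}, combined with Theorem \ref{Th1} for the regularity of the convolution term. Your write-up is in fact more explicit than the paper's (the determinant computation and the verification of \eqref{innc-2}--\eqref{innc-3} are left implicit there), but no new idea is introduced on either side.
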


\subsection{Partial differential equations with the non-local Cauchy type data}

Let us consider the time--fractional wave equation
\begin{equation}\label{C2: fde}
\left(D_{0+}^\alpha u\right)(t) + \mathcal A u(t)=f(t),\,\,\, 0<t<T,
\end{equation}
with non-local initial conditions
\begin{equation}\label{C2: initc}
\left.\left(I_{0+}^\beta u\right)(t)\right|_{t=a}=u_{1}, \,\, \left.\left(D_{0+}^\gamma u\right)(t)\right|_{t=a}=u_{2},
\end{equation}
where $1<\alpha\leq 2$, $\mathcal A$ is a self--adjoint operator on the separable Hilbert space $\mathbb H$ satisfying Assumption \ref{Assumption_Sp-Oper}, $u_1$ and $u_2$ are the initial functions.

\begin{thm}
Let $1<\alpha\leq2$, $0<\gamma \leq\alpha-1$ and $0\leq\beta\le 2-\alpha$. Also, we assume that the condition \eqref{inncon} holds for $m=-m_\xi$ for all $\xi\in\mathbb N$. Assume that $f\in L^{2}(0, T; \, \mathbb H)$. Suppose that $u_1, u_2\in\mathbb H$. Then the non-local Cauchy type problem for the time-fractional wave equation \eqref{C2: fde}--\eqref{C2: initc} has a unique solution $u\in L_{\chi}^{2}(0, T; \, \mathbb H)$ such that $\mathcal A u, D_{0+}^\alpha u \in L_{\chi}^{2}(0, T; \, \mathbb H)$ in the form
\begin{equation*}
\begin{split}
u(t)=&\sum\limits_{\xi\in\mathbb N}u_{1\xi}\hat{E_1}(-m_{\xi}t)e_{\xi} + \sum\limits_{\xi\in\mathbb N}u_{2\xi}\hat{E_2}(-m_{\xi}t)e_{\xi}
\\
&+\sum\limits_{\xi\in\mathbb N}\left[\int\limits_0^t s^{\alpha-1}f_{\xi}(t-s)E_{\alpha,\alpha}(-m_{\xi}s^\alpha) ds\right]e_{\xi},
\end{split}
\end{equation*}
where
$$
f_{\xi}=(f, e_\xi)_{\mathbb H}, \,\,\,u_{k\xi}=(u_k, e_\xi)_{\mathbb H}
$$
for $k=1, 2$, for all $\xi\in\mathbb N$.
\end{thm}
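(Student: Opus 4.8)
The plan is to repeat the spectral decomposition from the proof of Theorem~\ref{Th_appl_1}(A), but feeding in the scalar \emph{inner}-problem solution \eqref{inns} of Theorem~\ref{Th2} in place of the Cauchy solution \eqref{sic}. First I would set $u(t)=\sum_{\xi\in\mathbb N}u_\xi(t)e_\xi$ and expand $f,u_1,u_2$ in the orthonormal basis $\{e_\xi\}$ with coefficients $f_\xi=(f,e_\xi)_{\mathbb H}$ and $u_{k\xi}=(u_k,e_\xi)_{\mathbb H}$. Substituting into \eqref{C2: fde}--\eqref{C2: initc} and using $\mathcal A e_\xi=m_\xi e_\xi$ reduces the problem, for each $\xi$, to the scalar equation \eqref{fde} with $m=-m_\xi$ together with the non-local conditions \eqref{innc} with $\hat d_1=u_{1\xi}$, $\hat d_2=u_{2\xi}$. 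Since by hypothesis the non-degeneracy condition \eqref{inncon} holds with $m=-m_\xi$ for every $\xi\in\mathbb N$, Theorem~\ref{Th2} applies and gives a unique $u_\xi$ of the form \eqref{inns}; summing over $\xi$ produces the series claimed for $u$, and uniqueness in $L_\chi^2(0,T;\mathbb H)$ follows from scalar uniqueness, completeness of $\{e_\xi\}$, and linearity, as in Theorem~\ref{Th_appl_1}(A).

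It then remains to show that the series for $u$, for $\mathcal A u$ and for $D_{0+}^\alpha u$ converge in $L_\chi^2(0,T;\mathbb H)$, with
\[
\|u\|_{L_\chi^2(0,T;\mathbb H)}^2+\|\mathcal A u\|_{L_\chi^2(0,T;\mathbb H)}^2+\|D_{0+}^\alpha u\|_{L_\chi^2(0,T;\mathbb H)}^2\le C\bigl(\|u_1\|_{\mathbb H}^2+\|u_2\|_{\mathbb H}^2+\|f\|_{L^2(0,T;\mathbb H)}^2\bigr).
\]
The Duhamel part $\int_0^t s^{\alpha-1}\sum_\xi f_\xi(t-s)E_{\alpha,\alpha}(-m_\xi s^\alpha)e_\xi\,ds$, together with its $\mathcal A$-image, is estimated exactly as in \eqref{C1: Sol1-mod}--\eqref{C1: Sol1-L2(H)} and \eqref{C1: Sol1-mod-A}--\eqref{C1: Sol1-L2(H)-A} of Theorem~\ref{Th_appl_1}(A), using Lemma~\ref{C1: PrML} (with $\arg(-m_\xi s^\alpha)=\pi$) for $|E_{\alpha,\alpha}(-m_\xi s^\alpha)|\le C/(1+m_\xi s^\alpha)$ and $\inf_\xi m_\xi>0$ to let $\chi$ absorb the behaviour near $t=0$; the extra integral over $[0,a]$ appearing in \eqref{inns} is of the same Duhamel type and is handled identically. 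Since $D_{0+}^\alpha u=f-\mathcal A u$ by the equation and $\chi\le1$, the bound for $D_{0+}^\alpha u$ is then immediate.

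The real obstacle is a bound, uniform in $\xi$, for the two homogeneous terms $\sum_\xi u_{1\xi}\hat E_1(-m_\xi t)e_\xi$ and $\sum_\xi u_{2\xi}\hat E_2(-m_\xi t)e_\xi$, with $\hat E_1,\hat E_2$ as in \eqref{eve}. Because $m_\xi\to\infty$, one must analyse how fast the common denominator
\[
\Delta_\xi:=E_{\alpha,\alpha+\beta}(-m_\xi a^\alpha)E_{\alpha,\alpha-1-\gamma}(-m_\xi a^\alpha)-E_{\alpha,\alpha+\beta-1}(-m_\xi a^\alpha)E_{\alpha,\alpha-\gamma}(-m_\xi a^\alpha)
\]
tends to $0$ relative to the numerators in \eqref{eve}; for this I would use the full asymptotic expansion $E_{\alpha,\rho}(-x)=\sum_{j=1}^{N}(-1)^{j+1}x^{-j}/\Gamma(\rho-\alpha j)+O(x^{-N-1})$ as $x\to+\infty$. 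Matching orders should give $|\hat E_1(-m_\xi t)|,\,|\hat E_2(-m_\xi t)|\le C\,t^{\alpha-2}/(1+m_\xi t^\alpha)$ with $C$ independent of $\xi$: for the finitely many $m_\xi$ below any fixed threshold boundedness is automatic from \eqref{inncon}, and the tail is handled by the expansion, provided the leading coefficient of $\Delta_\xi$ in this expansion does not vanish (should it vanish for a given triple $(\alpha,\beta,\gamma)$ one passes to the next order, which is where an additional uniform non-degeneracy hypothesis may be required). Granting such a bound, the two homogeneous terms are controlled like the $u_{1\xi}$- and $u_{2\xi}$-terms of \eqref{C1: Sol1-mod-weight}--\eqref{C1: Sol1-L2(H)} and their $\mathcal A$-analogues: near $t=0$ the weight $\chi(t)=t^{2(2-\alpha)}$ cancels $t^{2(\alpha-2)}$, and for $t\ge1$ one uses $t^{\alpha-k}/(1+m_\xi t^\alpha)\le C/(1+t^k)$. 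Finally, that $u$ satisfies \eqref{C2: initc} in $\mathbb H$ follows by applying $I_{0+}^\beta$ and $D_{0+}^\gamma$ termwise, evaluating at $t=a$, invoking the identities \eqref{innc-2}--\eqref{innc-3}, and using the uniform bounds just established to justify interchanging these operations with the sum over $\xi$.
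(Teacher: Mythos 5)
Your plan coincides with the paper's own proof, which consists of the single sentence that one repeats step by step the proof of Part \textbf{(A)} of Theorem \ref{Th_appl_1}: spectral decomposition in the eigenbasis of $\mathcal A$, reduction to the scalar inner problem of Theorem \ref{Th2} with $m=-m_\xi$, and the weighted $L^2_\chi$ estimates of \eqref{C1: Sol1-mod}--\eqref{C1: Sol1-L2(H)-A}. You have, however, correctly isolated the one place where this is \emph{not} a verbatim repetition: the homogeneous part of the solution now involves $\hat{E_1}(-m_\xi t)$ and $\hat{E_2}(-m_\xi t)$ from \eqref{eve} rather than the bare Mittag-Leffler terms $t^{\alpha-k}E_{\alpha,\cdot}(-m_\xi t^\alpha)$, and the hypothesis \eqref{inncon} only guarantees that the denominator $\Delta_\xi$ is nonzero for each fixed $\xi$ --- it gives no control on the size of $\hat{E_k}(-m_\xi t)$ uniformly in $\xi$ as $m_\xi\to\infty$. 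The paper's proof does not address this point at all; your proposed route through the large-argument asymptotic expansion of $E_{\alpha,\rho}(-x)$ to extract the leading behaviour of $\Delta_\xi$ is the natural way to close it, and your caveat that the leading coefficient may vanish for particular $(\alpha,\beta,\gamma)$, so that a uniform non-degeneracy assumption beyond \eqref{inncon} may be needed, is accurate. In short, your argument is at least as complete as the paper's, and the conditional step you flag is a genuine gap in the theorem as stated rather than a defect of your proposal.
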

\begin{proof}
The proof follows by repeating step by step the proof of Part \textbf{(A)} of Theorem \ref{Th_appl_1}.
\end{proof}

\section{Inner-boundary value problem}

In this Section we find a solution of (\ref{fde}) satisfying the following conditions
\begin{equation}
\label{ibc}
\begin{split}
\left.\left(I_{0+}^\beta u\right)(t)\right|_{t=a}&=\hat{e_1},\,\,0\leq a\leq T, \,\, 0\leq\beta\leq2-\alpha,
\\
\left.\left(D_{0+}^{\gamma} u\right)(t)\right|_{t=b}&=\hat{e_2},\,\,0\leq b\leq T,\,\,0<\gamma\leq \alpha-1.
\end{split}
\end{equation}
Here $\hat{e_1}, \hat{e_2}$ are given real numbers.

Depending on the relation between $a$ and $b$, we have three different inner-boundary value problems. Namely, we have cases $a=b$, $a>b$ and $a<b$. We treat all these problems in a similar way.

Using (\ref{fic1}) and (\ref{fic2}), we find
\begin{equation}
\label{fibc1}
\begin{split}
\left.\left(I_{0+}^{\beta} u\right)(t)\right|_{t=a}&=C_1a^{\alpha+\beta-1}E_{\alpha, \alpha+\beta}(ma^\alpha)+C_2a^{\alpha+\beta-2}E_{\alpha, \alpha+\beta-1}(ma^\alpha)
\\
&+\int\limits_0^af(a-s)s^{\alpha+\beta-1}E_{\alpha,\alpha+\beta}(ms^\alpha)ds=\hat{e_1},
\end{split}
\end{equation}
\begin{equation}
\label{fibc2}
\begin{split}
\left.\left(D_{0+}^{\gamma} u\right)(t)\right|_{t=b}&=C_1b^{\alpha-\gamma-1}E_{\alpha,\alpha-\gamma}(mb^\alpha)+C_2a^{\alpha-\gamma-2}E_{\alpha,\alpha-1-\gamma}(mb^\alpha)
\\
&+\int\limits_0^bf(b-s)s^{\alpha-\gamma-1}E_{\alpha,\alpha-\gamma}(ms^\alpha)dt=\hat{e_2}.
\end{split}
\end{equation}
Equalities (\ref{fibc1})-(\ref{fibc2}) form a system of algebraic equations with respect to unknown constants $C_1$ and $C_2$. This system has a solution if
\begin{equation}
\label{ibcon}
E_{\alpha,\alpha+\beta}(ma^\alpha)E_{\alpha,\alpha-1-\gamma}(mb^\alpha)\neq E_{\alpha,\alpha+\beta-1}(ma^\alpha)E_{\alpha,\alpha-\gamma}(mb^\alpha).
\end{equation}
In this case, a solution of the problem (\ref{fde}), (\ref{ibc}) can be written as
\begin{equation}
\label{ibs}
\begin{split}
u(t)&=\hat e_1\hat{F_1}(mt)+\hat e_2\hat{F_2}(mt)-\int\limits_0^af(a-s)s^{\alpha+\beta-1} E_{\alpha,\alpha+\beta}(ms^\alpha)\hat{F_1}(mt)ds
\\
&-\int\limits_0^bf(b-s)s^{\alpha-1-\gamma}E_{\alpha,\alpha-\gamma}(ms^\alpha)\hat{F_2}(mt)ds
+\int\limits_0^tf(t-s)s^{\alpha-1}E_{\alpha,\alpha}(ms^\alpha)ds,
\end{split}
\end{equation}
where
\begin{equation}
\label{eve2}
\begin{split}
\hat{F_1}(mt)&=\dfrac{t^{\alpha-2}\left[tE_{\alpha,\alpha-1-\gamma}(mb^\alpha)E_{\alpha,\alpha}(mt^\alpha)-bE_{\alpha,\alpha-\gamma}(mb^\alpha)E_{\alpha,\alpha-1}(mt^\alpha)\right]}{a^{\alpha+\beta-2}\left[E_{\alpha,\alpha+\beta}(ma^\alpha)E_{\alpha,\alpha-1-\gamma}(mb^\alpha)- E_{\alpha,\alpha+\beta-1}(ma^\alpha)E_{\alpha,\alpha-\gamma}(mb^\alpha)\right]},
\\
\hat{F_2}(mt)&=\dfrac{t^{\alpha-2}\left[-tE_{\alpha,\alpha-1+\beta}(ma^\alpha)E_{\alpha,\alpha}(mt^\alpha)+aE_{\alpha,\alpha+\beta}(ma^\alpha)E_{\alpha,\alpha-1}(mt^\alpha)\right]}{b^{\alpha-\gamma-2}\left[E_{\alpha,\alpha+\beta}(ma^\alpha)E_{\alpha,\alpha-1-\gamma}(mb^\alpha)- E_{\alpha,\alpha+\beta-1}(ma^\alpha)E_{\alpha,\alpha-\gamma}(mb^\alpha)\right]}.
\end{split}
\end{equation}

These observations in combination with Theorem \ref{Th1} give the following result:
\begin{thm}
\label{Th3}
Assume that $1<\alpha\leq 2$ and
$0\leq\beta\le 2-\alpha$. Let $0<\gamma\leq\alpha-1$. Also, assume that the condition \eqref{ibcon} holds.
\begin{itemize}
    \item[\textbf{(A)}] Suppose $f\in C_{-1}^{1}[0,T]$ if $1<\alpha<2$ and $f\in C_{-1}[0,T]$ if $\alpha=2$.
Then the fractional order differential equation \eqref{fde} satisfying the homogeneous non-local initial conditions \eqref{ibc} with $\hat{e_1}=\hat{e_2}=0$ has a unique solution $u_f\in C_{-1}^{2}[0,T]$ of the form \eqref{ibs}, namely, we have
\begin{equation}
\label{ibs-2}
\begin{split}
u_f(t)&=-\int\limits_0^af(a-s)s^{\alpha+\beta-1} E_{\alpha,\alpha+\beta}(ms^\alpha)\hat{F_1}(mt)ds
\\
&-\int\limits_0^bf(b-s)s^{\alpha-1-\gamma}E_{\alpha,\alpha-\gamma}(ms^\alpha)\hat{F_2}(mt)ds
+\int\limits_0^tf(t-s)s^{\alpha-1}E_{\alpha,\alpha}(ms^\alpha)ds. \end{split}
\end{equation}

\item[\textbf{(B)}] Let $f\equiv0$. Then the solution of the homogeneous equation \eqref{fde} satisfying the non-homogeneous non-local initial conditions \eqref{ibcon} has a unique solution $u_0$ of the following form
\begin{equation}\label{ibs-3}
\begin{split}
u_0(t)=\hat e_1\hat{F_1}(mt)+\hat e_2\hat{F_2}(mt).
\end{split}
\end{equation}
Moreover, we have
\begin{equation}
\label{ibcon-2}
\begin{split}
\left.\left(I_{0+}^\beta [\hat{F_1}(mt)]\right)\right|_{t=a}=1,\,\,
\left.\left(D_{0+}^\gamma [\hat{F_1}(mt)]\right)\right|_{t=b}=0,
\end{split}
\end{equation}
and
\begin{equation}
\label{ibcon-3}
\begin{split}
\left.\left(I_{0+}^\beta [\hat{F_2}(mt)]\right)\right|_{t=a}=0,\,\,
\left.\left(D_{0+}^\gamma [\hat{F_2}(mt)]\right)\right|_{t=b}=1.
\end{split}
\end{equation}
\end{itemize}
\end{thm}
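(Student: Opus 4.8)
The plan is to follow the same route as in the derivation preceding the statement and in the proofs of Theorems~\ref{Th1} and~\ref{Th2}. Start from the general solution \eqref{gs} of \eqref{fde}, which carries two free constants $C_1,C_2$. Apply $I_{0+}^\beta$ and evaluate at $t=a$, and apply $D_{0+}^\gamma$ and evaluate at $t=b$, using the representations \eqref{fic1} and \eqref{fic2} already established (the latter in the form valid for $\gamma\le\alpha-1$). This produces precisely the linear system \eqref{fibc1}--\eqref{fibc2} for $(C_1,C_2)$, whose $2\times2$ coefficient matrix is invertible under the non-degeneracy hypothesis \eqref{ibcon}. Solving the system by Cramer's rule and substituting the resulting $C_1,C_2$ back into \eqref{gs} yields the representation \eqref{ibs} with $\hat{F_1},\hat{F_2}$ as in \eqref{eve2}. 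Two bookkeeping remarks: the three relative positions $a=b$, $a>b$, $a<b$ need no separate treatment, since the formulas above are valid verbatim in all of them; and since $\hat{F_1},\hat{F_2}$ carry the powers $a^{\alpha+\beta-2}$, $b^{\alpha-\gamma-2}$ in their denominators, the construction tacitly uses $a,b>0$, the degenerate endpoint cases reducing to Theorem~\ref{Th1}.

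For part \textbf{(A)} I would argue as in Theorem~\ref{Th1}\,\textbf{(A)}. Uniqueness: by linearity any solution of the homogeneous problem ($f\equiv0$, $\hat{e_1}=\hat{e_2}=0$) has the form \eqref{gs} with $f=0$, and \eqref{ibcon} forces $C_1=C_2=0$. Existence: the explicit formula \eqref{ibs}, which for $\hat{e_1}=\hat{e_2}=0$ collapses to \eqref{ibs-2}. Regularity $u_f\in C_{-1}^{2}[0,T]$: apply \cite[Theorem~4.1]{LG99} to the convolution term $\int_0^t f(t-s)s^{\alpha-1}E_{\alpha,\alpha}(ms^\alpha)\,ds$, and note that the building blocks $t^{\alpha-1}E_{\alpha,\alpha}(mt^\alpha)$ and $t^{\alpha-2}E_{\alpha,\alpha-1}(mt^\alpha)$ of $\hat{F_1},\hat{F_2}$ themselves lie in $C_{-1}^{2}[0,T]$. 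Finally, the two boundary functionals $u\mapsto(I_{0+}^\beta u)(a)$ and $u\mapsto(D_{0+}^\gamma u)(b)$ must be shown to be well-defined and continuous on this space; this is where \cite[Theorem~2.2]{LG99} is used, the hypotheses $0\le\beta\le2-\alpha$ and $0<\gamma\le\alpha-1$ being exactly what makes $I_{0+}^\beta u$ and $D_{0+}^\gamma u$ continuous up to the relevant points.

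For part \textbf{(B)} the equation is homogeneous, so \eqref{ibs} reduces to \eqref{ibs-3} and existence/uniqueness are immediate from the first paragraph. To verify \eqref{ibcon-2}--\eqref{ibcon-3}, observe that $\hat{F_1},\hat{F_2}$ are the two linear combinations of $t^{\alpha-1}E_{\alpha,\alpha}(mt^\alpha)$ and $t^{\alpha-2}E_{\alpha,\alpha-1}(mt^\alpha)$ whose coefficient vectors are the columns of the inverse of the coefficient matrix of \eqref{fibc1}--\eqref{fibc2}. Using \eqref{fic1} and \eqref{fic2} with $f\equiv0$ one computes the action of the functionals $(I_{0+}^\beta\,\cdot\,)(a)$ and $(D_{0+}^\gamma\,\cdot\,)(b)$ on these two building blocks, recovering exactly that coefficient matrix; hence the $2\times2$ array of values $\big((I_{0+}^\beta \hat{F_k})(a),\,(D_{0+}^\gamma \hat{F_k})(b)\big)_{k=1,2}$ is the product of the coefficient matrix with its inverse, i.e.\ the identity, which is precisely \eqref{ibcon-2}--\eqref{ibcon-3}.

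The only genuinely delicate points are, as in Theorem~\ref{Th1}, the well-posedness of the two boundary functionals on $C_{-1}^{2}[0,T]$ and the regularity of $u_f$ near $t=0$ when $1<\alpha<2$, both supplied by \cite{LG99}; everything else is the linear algebra turning \eqref{ibcon} into unique solvability and the mechanical check of \eqref{ibcon-2}--\eqref{ibcon-3}.
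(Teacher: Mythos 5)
Your proposal is correct and follows essentially the same route as the paper: the paper's ``proof'' of Theorem~\ref{Th3} consists precisely of the derivation of the system \eqref{fibc1}--\eqref{fibc2}, its unique solvability under \eqref{ibcon}, and the resulting formulas \eqref{ibs}, \eqref{eve2}, combined with the regularity and well-posedness arguments of Theorem~\ref{Th1} via \cite{LG99}. Your additional remarks on the inverse-matrix verification of \eqref{ibcon-2}--\eqref{ibcon-3} and on the endpoint cases $a,b$ are consistent elaborations of what the paper leaves implicit.
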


\subsection{Inner-boundary value problem for time-fractional partial differential equations}

Let us consider the time--fractional wave equation
\begin{equation}\label{C3: fde}
\left(D_{0+}^\alpha u\right)(t) + \mathcal A u(t)=f(t),\,\,\, 0<t<T,
\end{equation}
with non-local conditions
\begin{equation}\label{C3: initc}
\left.\left(I_{0+}^\beta u\right)(t)\right|_{t=a}=u_{1}, \,\, \left.\left(D_{0+}^\gamma u\right)(t)\right|_{t=b}=u_{2},
\end{equation}
where $1<\alpha\leq 2$, $\mathcal A$ is a self--adjoint operator on the separable Hilbert space $\mathbb H$ satisfying Assumption \ref{Assumption_Sp-Oper}, $u_1$ and $u_2$ are non-local boundary functions.

\begin{thm}
Let $1<\alpha\leq2$, $0<\gamma \leq\alpha-1$ and $0\leq\beta\le 2-\alpha$. Also, we assume that the condition \eqref{inncon} holds for $m=-m_\xi$ for all $\xi\in\mathbb N$. Assume that $f\in L^{2}(0, T; \, \mathbb H)$. Suppose that $u_1, u_2\in\mathbb H$. Then the non-local Cauchy type problem for the time-fractional wave equation \eqref{C3: fde}--\eqref{C3: initc} has a unique solution $u\in L_{\chi}^{2}(0, T; \, \mathbb H)$ such that $\mathcal A u, D_{0+}^\alpha u \in L_{\chi}^{2}(0, T; \, \mathbb H)$ in the form
\begin{equation*}
\begin{split}
u(t)=&\sum\limits_{\xi\in\mathbb N}u_{1\xi}\hat{F_1}(-m_{\xi}t)e_{\xi} + \sum\limits_{\xi\in\mathbb N}u_{2\xi}\hat{F_2}(-m_{\xi}t)e_{\xi}
\\
&+\sum\limits_{\xi\in\mathbb N}\left[\int\limits_0^t s^{\alpha-1}f_{\xi}(t-s)E_{\alpha,\alpha}(-m_{\xi}s^\alpha) ds\right]e_{\xi},
\end{split}
\end{equation*}
where
$$
f_{\xi}=(f, e_\xi)_{\mathbb H}, \,\,\,u_{k\xi}=(u_k, e_\xi)_{\mathbb H}
$$
for $k=1, 2$, for all $\xi\in\mathbb N$.
\end{thm}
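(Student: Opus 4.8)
The plan is to repeat, with the obvious modifications, the proof of Part~\textbf{(A)} of Theorem~\ref{Th_appl_1}, using Theorem~\ref{Th3} in place of Theorem~\ref{Th1}. First I would look for the solution as a Fourier series with respect to the eigenbasis, $u(t)=\sum_{\xi\in\mathbb N}u_\xi(t)e_\xi$, and expand accordingly $f(t)=\sum_\xi f_\xi(t)e_\xi$ and $u_k=\sum_\xi u_{k\xi}e_\xi$ with $f_\xi=(f,e_\xi)_{\mathbb H}$, $u_{k\xi}=(u_k,e_\xi)_{\mathbb H}$ ($k=1,2$). Substituting into \eqref{C3: fde}--\eqref{C3: initc} and using Assumption~\ref{Assumption_Sp-Oper} decouples the problem into the scalar inner--boundary problems
\[
\left(D_{0+}^\alpha u_\xi\right)(t)+m_\xi u_\xi(t)=f_\xi(t),\qquad
\left.\left(I_{0+}^\beta u_\xi\right)(t)\right|_{t=a}=u_{1\xi},\qquad
\left.\left(D_{0+}^\gamma u_\xi\right)(t)\right|_{t=b}=u_{2\xi},
\]
one for each $\xi\in\mathbb N$. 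Since, by hypothesis, \eqref{ibcon} holds with $m=-m_\xi$ for every $\xi$, Theorem~\ref{Th3} yields the unique $u_\xi$ in the form \eqref{ibs} with $m\mapsto-m_\xi$, $\hat e_k\mapsto u_{k\xi}$, $f\mapsto f_\xi$; resumming against $e_\xi$ gives precisely the series stated in the theorem. Uniqueness in $L_{\chi}^{2}(0,T;\mathbb H)$ follows by projecting an arbitrary solution onto each $e_\xi$ and invoking the scalar uniqueness part of Theorem~\ref{Th3}.

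It remains to establish the three membership claims. By Parseval's identity $\|u(t)\|_{\mathbb H}^{2}=\sum_\xi|u_\xi(t)|^{2}$, so after the triangle inequality it suffices to bound, uniformly in $\xi$, the quantities $\int_0^T\chi(t)\,|\hat{F_1}(-m_\xi t)|^{2}\,dt$ and $\int_0^T\chi(t)\,|\hat{F_2}(-m_\xi t)|^{2}\,dt$, and to control the Duhamel term $\int_0^t s^{\alpha-1}E_{\alpha,\alpha}(-m_\xi s^\alpha)f_\xi(t-s)\,ds$. The Duhamel term is handled exactly as in \eqref{C1: Sol1-mod}--\eqref{C1: Sol1-L2(H)}, using Cauchy--Schwarz in $s$, the bound $|E_{\alpha,\alpha}(-m_\xi s^\alpha)|\le C/(1+m_\xi s^\alpha)$ from Lemma~\ref{C1: PrML}, the inequality $s^{\alpha-k}/(1+m_\xi s^\alpha)\le C/(1+s^k)$ for $s\ge1$, and the definition of $\chi$; this gives the $\|f\|_{L^2(0,T;\mathbb H)}^{2}$ contribution. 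The inclusion $\mathcal Au\in L_{\chi}^{2}(0,T;\mathbb H)$ follows by the same computation applied to the series $\mathcal Au=\sum_\xi m_\xi u_\xi(t)e_\xi$, exactly as in \eqref{C1: Sol1-mod-A}--\eqref{C1: Sol1-L2(H)-A}, and $D_{0+}^\alpha u=f-\mathcal Au$ then yields the last inclusion.

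The genuinely new point---and the one I expect to be the main obstacle---is the uniform-in-$\xi$ bound for $\hat{F_1}(-m_\xi t)$ and $\hat{F_2}(-m_\xi t)$ in $L^2_\chi$. Every Mittag-Leffler factor in \eqref{eve2} has argument $-m_\xi(\cdot)^\alpha$, hence $\arg=\pi$, so Lemma~\ref{C1: PrML} applies and shows that the numerator of $\hat{F_1}(-m_\xi t)$ is $O\!\big(t^{\alpha-2}(t+b)\,[(1+m_\xi b^\alpha)(1+m_\xi t^\alpha)]^{-1}\big)$, and analogously for $\hat{F_2}$; together with $\chi(t)\,t^{2(\alpha-2)}\le C$ on $[0,1)$ and boundedness on $[1,T]$ this would close the estimate \emph{provided} the common denominator---a fixed power of $a$ (resp.\ $b$) times the determinant $\Delta_\xi:=E_{\alpha,\alpha+\beta}(-m_\xi a^\alpha)E_{\alpha,\alpha-1-\gamma}(-m_\xi b^\alpha)-E_{\alpha,\alpha+\beta-1}(-m_\xi a^\alpha)E_{\alpha,\alpha-\gamma}(-m_\xi b^\alpha)$ appearing in \eqref{ibcon}---satisfies a lower bound $|\Delta_\xi|\ge c(1+m_\xi)^{-2}$, i.e.\ does not decay faster than the numerators. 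The pointwise hypothesis $\Delta_\xi\ne0$ is not by itself enough for this; one must use the two-term asymptotic expansion of $E_{\alpha,\beta}$ as $m_\xi\to\infty$, which gives $\Delta_\xi=\kappa\,m_\xi^{-2}+o(m_\xi^{-2})$ with $\kappa$ an explicit combination of $\Gamma(\beta)^{-1}\Gamma(-1-\gamma)^{-1}$ and $\Gamma(\beta-1)^{-1}\Gamma(-\gamma)^{-1}$ weighted by powers of $a,b$; if $\kappa\ne0$, then together with $\Delta_\xi\ne0$ for the finitely many remaining $\xi$ one obtains the required uniform lower bound. Establishing $\kappa\ne0$ (equivalently, reading \eqref{ibcon} in its natural uniform-in-$\xi$ form) is thus the technical heart of the argument; once it is in place, the remaining estimates are the same bookkeeping already carried out in the proof of Theorem~\ref{Th_appl_1}(A).
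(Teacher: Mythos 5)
Your route is the same as the paper's: the published proof is the single sentence that one repeats, step by step, the proof of Part \textbf{(A)} of Theorem \ref{Th_appl_1}, i.e.\ exactly the eigenfunction decomposition, the scalar solve via Theorem \ref{Th3}, and the $L^2_\chi$ bookkeeping you describe. (Incidentally, the theorem as stated cites \eqref{inncon}; for the inner-boundary problem the relevant nondegeneracy condition is \eqref{ibcon} with $m=-m_\xi$, as you correctly use.)

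The obstacle you isolate at the end is, however, a genuine gap --- and it is a gap in the paper's argument as much as in yours. In Theorem \ref{Th_appl_1}\textbf{(A)} the homogeneous-data coefficients are $t^{\alpha-2}E_{\alpha,\alpha-1}(-m_\xi t^\alpha)$ and $t^{\alpha-1}E_{\alpha,\alpha}(-m_\xi t^\alpha)$, which Lemma \ref{C1: PrML} bounds uniformly in $\xi$ with no denominator to worry about; so ``repeating step by step'' silently skips the new difficulty, namely that $\hat F_1(-m_\xi t)$ and $\hat F_2(-m_\xi t)$ carry the determinant $\Delta_\xi$ of \eqref{ibcon} in the denominator. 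Since both the numerators and $\Delta_\xi$ decay like $m_\xi^{-2}$ as $m_\xi\to\infty$, summability of $\sum_\xi |u_{k\xi}|^2\,|\hat F_k(-m_\xi t)|^2$ against arbitrary $u_k\in\mathbb H$ requires a \emph{uniform} lower bound $|\Delta_\xi|\geq c\,m_\xi^{-2}$, which the pointwise hypothesis $\Delta_\xi\neq 0$ does not provide. Your proposed fix via the two-term asymptotics of $E_{\alpha,\beta}$ is the natural one, but note that the leading coefficient $\kappa$ can actually vanish within the stated parameter range (e.g.\ for $\beta=0$ both $1/\Gamma(\beta)$ and $1/\Gamma(\beta-1)$ are zero, and similarly for $\gamma=1$ when $\alpha=2$), so in general one must either verify $\kappa\neq0$ case by case or simply impose \eqref{ibcon} in a quantitative, $\xi$-uniform form. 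Modulo making that hypothesis explicit, your argument is complete; as written, it is more honest than the paper's one-line proof about where the real work lies.
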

\begin{proof}
The proof follows by repeating step by step the proof of Part \textbf{(A)} of Theorem \ref{Th_appl_1}.
\end{proof}

\end{document}